\documentclass[11pt]{article}
\def\arxiv{}

\usepackage[
  paper  = letterpaper,
  left   = 1.2in,
  right  = 1.2in,
  top    = 1in,
  bottom = 1in,
  ]{geometry}

\usepackage{amsmath,amssymb,amsthm,mathtools,mathrsfs,braket,bbm,dsfont}
\usepackage{graphicx,subcaption,tikz}
\usepackage{enumitem}
\usepackage{algorithm,algcompatible}
\usepackage{adjustbox}

\usepackage{authblk}
\usepackage{booktabs}
\usepackage[numbers,sort&compress]{natbib}
\bibliographystyle{unsrtnat}
\usepackage[hidelinks]{hyperref}

\usepackage[T1]{fontenc}
\usepackage[only,llbracket,rrbracket]{stmaryrd}


\newtheorem{theorem}{Theorem}
\newtheorem{lemma}[theorem]{Lemma}

\newtheorem{remark}[theorem]{Remark}
\newtheorem{corollary}[theorem]{Corollary}
\newtheorem{definition}[theorem]{Definition}

\newtheorem{assumption}{Assumption}

\newcommand{\acks}[1]{\section*{Acknowledgements}#1}

\title{Fast Minimization of Expected Logarithmic Loss\\ via Stochastic Dual Averaging}

\author[1]{Chung-En Tsai}
\author[2,3,4,5,6]{Hao-Chung Cheng}
\author[1,3,4]{Yen-Huan Li}
\affil[1]{Department of Computer Science and Information Engineering,\protect\\National Taiwan University}
\affil[2]{Department of Electrical Engineering and Graduate Institute of\protect\\Communication Engineering, National Taiwan University}
\affil[3]{Department of Mathematics, National Taiwan University}
\affil[4]{Center for Quantum Science and Engineering,\protect\\National Taiwan University}
\affil[5]{Physics Division, National Center for Theoretical Sciences}
\affil[6]{Hon Hai (Foxconn) Quantum Computing Centre}

\date{\vspace{-5ex}}

\newcommand{\R}{\mathbb{R}}
\newcommand{\N}{\mathbb{N}}

\newcommand{\C}{\mathbb{C}}

\renewcommand{\H}{\mathbb{H}}

\newcommand{\D}{\mathscr{D}}
\newcommand{\E}{\mathbb{E}}
\newcommand{\V}{\mathbb{V}}
\renewcommand{\O}{\mathcal{O}}

\DeclareMathOperator{\ri}{ri}
\DeclareMathOperator{\dom}{dom}

\DeclareMathOperator{\per}{per}
\DeclareMathOperator{\rel}{rel}

\DeclareMathOperator*{\argmin}{arg\!\min}
\DeclarePairedDelimiter\abs{\lvert}{\rvert}%
\DeclarePairedDelimiter\norm{\lVert}{\rVert}%
\DeclarePairedDelimiter\brackets{\llbracket}{\rrbracket}
\DeclareMathOperator*{\tr}{tr}

\newcommand{\du}{\mathrm{d}}
\newcommand{\tildeO}{\smash{\tilde{O}}}

\begin{document}

\maketitle

\begin{abstract}
Consider the problem of minimizing an expected logarithmic loss over either the probability simplex or the set of quantum density matrices.
This problem includes tasks such as solving the Poisson inverse problem, computing the maximum-likelihood estimate for quantum state tomography, and approximating positive semi-definite matrix permanents with the currently tightest approximation ratio. 
Although the optimization problem is convex, standard iteration complexity guarantees for first-order methods do not directly apply due to the absence of Lipschitz continuity and smoothness in the loss function.

In this work, we propose a stochastic first-order algorithm named $B$-sample stochastic dual averaging with the logarithmic barrier.
For the Poisson inverse problem, our algorithm attains an $\varepsilon$-optimal solution in $\smash{\tilde{O}}(d^2/\varepsilon^2)$ time, matching the state of the art, where $d$ denotes the dimension.
When computing the maximum-likelihood estimate for quantum state tomography, our algorithm yields an $\varepsilon$-optimal solution in $\smash{\tilde{O}}(d^3/\varepsilon^2)$ time.
This improves on the time complexities of existing stochastic first-order methods by a factor of $d^{\omega-2}$ and those of batch methods by a factor of $d^2$, where $\omega$ denotes the matrix multiplication exponent.
Numerical experiments demonstrate that empirically, our algorithm outperforms existing methods with explicit complexity guarantees.
\end{abstract}


\ifdefined\arxiv
\section{Introduction}
\else
\section{INTRODUCTION}
\fi
\label{sec:introduction}
Denote by $\D_d$ the set of $d \times d$ {quantum density matrices}, i.e., the set of Hermitian positive semi-definite (PSD) matrices of unit traces. 
Let $P$ be a probability distribution over the set of $d \times d$ Hermitian PSD matrices. 
Consider the optimization problem of minimizing an expected logarithmic loss:
\begin{equation} \label{eq:quantum_problem}
	f^\star = \min_{\rho\in\D_d} f(\rho),\quad f(\rho)\coloneqq\E_{A\sim P}[-\log\tr(A\rho)] . 
\end{equation}
A point $\hat{\rho}\in\D_d$ is said to be $\varepsilon$-optimal if $f(\hat{\rho}) - f^\star \leq \varepsilon$.
When both $A$ and $\rho$ are restricted to diagonal matrices, the optimization problem \eqref{eq:quantum_problem} reduces to
\begin{equation} \label{eq:classical_problem}
	\min_{x\in\Delta_d} f(x),\quad f(x)\coloneqq\E_{a\sim P'}[-\log\braket{a,x}],
\end{equation}
where $\Delta_d$ denotes the probability simplex in $\R^d$ and $P'$ is a probability distribution over $[0,\infty)^d$.
We refer to the two problems \eqref{eq:quantum_problem} and \eqref{eq:classical_problem} as the \textit{quantum} setup and the \textit{classical} setup, respectively.
For the special cases when
\[
\begin{split}
	f(\rho)&\coloneqq\frac{1}{n}\sum_{i=1}^n -\log\tr(A_i\rho),\ \text{or}\\
	f(x)&\coloneqq\frac{1}{n}\sum_{i=1}^n -\log\braket{a_i,x},
\end{split}
\]
we say the optimization problems are in the \textit{finite-sum} setting with sample size $n$.

Notably, the classical setup \eqref{eq:classical_problem} encompasses the problem of computing Kelly's criterion, an asymptotically optimal strategy in long-term investment \citep{Kelly1956a,Algoet1988a,MacLean2011a}.
It is also equivalent to solving the Poisson inverse problem, which finds applications in positron emission tomography (PET) in medical imaging and astronomical image denoising \citep{Vardi1998a,Bertero2018a}.
Lastly, the problem is the batch counterpart of the online portfolio selection problem \citep{Cover1991a}, for which designing algorithms that are optimal in both regret and computational complexity has remained unsolved for over thirty years \citep{van-Erven2020a,Jezequel2022a}.

The quantum setup \eqref{eq:quantum_problem} has even broader applications.
One important example is computing the maximum-likelihood (ML) estimate for quantum state tomography \citep{Hradil1997a}, a fundamental task for the verification of quantum devices.
Another example is computing a semidefinite programming relaxation of the PSD matrix permanents \citep{Yuan2022a}.
The relaxation achieves the currently tightest approximation ratio and can be used to estimate the output probabilities of Boson sampling experiments \citep{Aaronson2011a}.

Although the optimization problems \eqref{eq:quantum_problem} and \eqref{eq:classical_problem} are convex, standard convex optimization methods face two challenges.
The first challenge is the lack of Lipschitz continuity and smoothness in the loss function \citep{Li2019a}.
As a result, iteration complexity guarantees of standard first-order methods, such as mirror descent and dual averaging, do not directly apply \citep{Nesterov2018a,Lan2020a}.
While second-order methods, such as Newton's method, do possess explicit complexity guarantees \citep{Nesterov2018a}, they still face the second challenge.

The second challenge is the scalabilities with respect to the dimension $d$ and sample size $n$ in the finite-sum setting.
For instance, the dimension $d$ typically reaches millions, and the sample size $n$ can exceed hundreds of millions in PET \citep{Ben-Tal2001a,Ehrhardt2017a}.
Both $d$ and $n$ grow exponentially with the number of qubits (quantum bits) in quantum state tomography \citep{Chen2023a}.
However, the per-iteration time complexities of batch methods 
grow at least linearly with $n$, and those of second-order methods scale poorly with $d$ as they require computing Hessian inverses.


When dealing with high dimensionality and large sample sizes, stochastic first-order algorithms, such as stochastic gradient descent, are preferred.
Their per-iteration time complexities can be independent of the sample size $n$, and they do not require computationally demanding operations involving Hessian matrices.
Nevertheless, standard stochastic first-order algorithms continue to face the challenge related to the lack of Lipschitz continuity and smoothness, as mentioned earlier.

Due to the absence of Lipschitz continuity and smoothness, mini-batch stochastic Q-Soft-Bayes (SQSB) \citep{Lin2021a,Lin2022a} and stochastic Q-LB-OMD (SQLBOMD) \citep{Tsai2022a,Hanzely2021a} are the only two stochastic first-order methods with clear time complexity guarantees for solving the problems \eqref{eq:quantum_problem} and \eqref{eq:classical_problem}.
Both algorithms do not compete with batch methods in terms of the empirical convergence speed, as shown in Section \ref{sec:numerical_results}.
Stochastic mirror descent studied by \citet{DOrazio2021a} is only guaranteed to solve the problems up to an arbitrarily large error\footnote{\citet{DOrazio2021a} proved that the average Bregman divergence to the minimizer asymptotically converges to $\sigma_{\mathcal{X}}^2$, which equals $f^\star$ when, for example, $\norm{ a_i }_\infty = 1$ in the classical setup and can be arbitrarily large in general.}. 
Other stochastic first-order methods, such as stochastic primal-dual hybrid gradient (SPDHG) \citep{Chambolle2018a,Alacaoglu2022a}, stochastic mirror-prox \citep{He2020a}, and stochastic coordinate descent \citep{Fercoq2019a}, are only guaranteed to converge asymptotically.


\paragraph{Contributions}

In this work, we propose a mini-batch stochastic first-order algorithm named $B$-sample stochastic dual averaging with the logarithmic barrier (LB-SDA,~Algorithm~\ref{alg:sda}) for solving the optimization problems \eqref{eq:quantum_problem} and \eqref{eq:classical_problem}, where $B$ denotes the mini-batch size.
The expected optimization error of $B$-sample LB-SDA vanishes at a rate of $\tildeO ( d / t + \sqrt{ d / ( B t ) } )$.
This matches the standard results of mini-batch stochastic gradient descent for minimizing \emph{smooth} functions \citep{Dekel2012a}, regardless of the absence of smoothness in our problem.

In the classical setup, the time complexity of obtaining an $\varepsilon$-optimal solution via $B$-sample LB-SDA is $\tildeO(d^2/\varepsilon^2)$, matching the state of the art of stochastic first-order methods \citep{Lin2022a,Tsai2022a}.
In the quantum setup, the time complexity of obtaining an $\varepsilon$-optimal solution is $\tildeO(d^3/\varepsilon^2)$ when the mini-batch size is set to $d$.
This improves the dimension dependence of existing stochastic algorithms by a factor of $d^{\omega-2}$, where $\omega \in [2,2.372)$ denotes the matrix multiplication exponent \citep{Williams2024a}.
It is worth noting that in practical implementation, such as BLAS \citep{Dongarra1990a}, $\omega$ is effectively $3$.
The time complexity guarantee also improves that of the currently fastest batch algorithm by a factor of $n/d$.
Such improvement is significant, given that $n=\Omega(d^3)$ is necessary for ML quantum state tomography \citep{Chen2023a}.


\aboverulesep=0ex
\belowrulesep=0ex
\renewcommand{\arraystretch}{1.5}
\begin{table*}[t!]
\adjustbox{max width=\textwidth}{%
\centering
\begin{tabular}{l|c|c|c|l}
\toprule
Algorithms & Iter. complexity & Per-iter. time & Time complexity & References \\
\midrule
NoLips & $d/\varepsilon$ & $nd^2 + d^\omega$ & $(nd^{3} + d^{\omega+1})/\varepsilon$ & \citet{Bauschke2017a} \\
\hline
QEM & $\log d/\varepsilon$ & $nd^2+d^\omega$ & $(nd^2+d^\omega)/\varepsilon$ & \citet{Lin2021a}\\
\hline
Frank-Wolfe & $n/\varepsilon$ & $nd^\omega$ & $n^2d^\omega/\varepsilon$ &  \citet{Zhao2023a} \\
\hline
SQSB & $d/\varepsilon^2$ & $d^\omega$ & $d^{\omega+1}/\varepsilon^2$ & \citet{Lin2022a}\\
\hline
SQLBOMD & $d/\varepsilon^2$ & $d^\omega$ & $d^{\omega+1}/\varepsilon^2$ & \citet{Tsai2022a}\\
\hline
\bf $1$-sample LB-SDA & $d/\varepsilon^2$ & $d^\omega$ & $d^{\omega+1}/\varepsilon^2$ & \bf Corollary~\ref{cor:sda} (this work) \\
\hline
\bf $d$-sample LB-SDA & $1/\varepsilon^2$ & $d^3$ & $d^{3}/\varepsilon^2$ & \bf Corollary~\ref{cor:sda} (this work) \\
\hline
\bf $B$-sample LB-SDA & $d/(B\varepsilon^2)$ & $Bd^2 + d^\omega$ & $d^3/\varepsilon^2 + d^{\omega+1}/(B\varepsilon^2)$ & \bf Corollary~\ref{cor:sda} (this work) \\
\bottomrule
\end{tabular}}
\caption{A comparison of existing first-order methods for the quantum setup \eqref{eq:quantum_problem} with explicit complexity guarantees.
Iteration complexity and time complexity refer to the number of iterations and arithmetic operations required to obtain an $\varepsilon$-optimal solution, respectively.
We assume $t\gg d^2$ and omit logarithmic factors, where $t$ denotes the number of iterations.}
\label{tab:time_complexity_comparison}
\end{table*}

Lastly, we conducted numerical experiments to demonstrate the efficiency of the proposed method.
The numerical results suggest that $1$-sample LB-SDA is the currently fastest method with explicit complexity guarantees for the Poisson inverse problem, and $d$-sample LB-SDA outperforms all methods in terms of fidelity, a standard measure of the closeness of quantum states, for ML quantum state tomography.
To the best of our knowledge, this is the first empirical evidence that stochastic first-order algorithms can surpass batch ones in computing the ML estimate for quantum state tomography.

\paragraph{Technical Breakthroughs}

Our analysis 
consists of three key ingredients: a regret bound of \citet{Tsai2023b}, a smoothness characterization of the logarithmic loss (Lemma~\ref{lem:self_bounding}), and a new local-norm-based analysis of the standard online-to-batch conversion \citep{Cesa-Bianchi2004a}, all of which are of independent interest.

\citet{Tsai2023b} proved the following regret bound for online convex optimization with the logarithmic loss on the probability simplex $\Delta_d$ (Appendix~\ref{app:olo}):
\begin{equation}
R_t \leq \tildeO \left(\sqrt{d\sum_{\tau=1}^t \norm{\nabla f_\tau(\rho_\tau)}_{\rho_\tau,\ast}^2 }\right) \leq \tildeO\left(\sqrt{dt}\right), \label{eq_regret_bound}
\end{equation}
where $\norm{\cdot}_{\rho,\ast}$ is the dual local norm associated with the logarithmic barrier.
Directly applying the standard online-to-batch conversion \citep{Cesa-Bianchi2004a} with the second upper bound
can only yield an optimization error bound of $\tildeO(\sqrt{d/t})$, which is independent of the mini-batch size. 
In comparison, we make use of the finer first upper bound and derive an optimization error bound of $\tilde{O} ( d / t + \sqrt{ d / ( Bt ) } )$. 
This leads to a time complexity bound of $\tildeO(d^3/\varepsilon^2 + d^{\omega+1}/(B\varepsilon^2))$ for the quantum setup, which creates space for improved dimensional scalability via choosing the mini-batch size $B$. 
See Section \ref{sec:time_complexity} for a detailed discussion. 

To make use of the finer first regret bound \eqref{eq_regret_bound}, we generalize the smoothness characterization of the logarithmic loss of \citet{Tsai2023b} for the quantum setup.
The original proof of \citet{Tsai2023b} is 
challenging to generalize due to the noncommutativity in the quantum setup.
Our generalization is based on a great simplification of their proof by utilizing self-concordance properties of the logarithmic loss (Appendix~\ref{app:self_concordance_relative_smoothness}). 

Our analysis modifies that of the anytime online-to-batch conversion \citep{Cutkosky2019a} to handle the local norms.
It is worth noting that we also adapt the analysis of anytime online-to-batch for the standard one, as the latter has shown better empirical performance.




\paragraph{Notations}
We denote the set $\{1,2,\ldots,n\}$ by $\brackets{n}$ for a natural number $n\in\N$.
We denote the $\ell_p$-norm by $\norm{\cdot}_p$ for $p\in [1,\infty]$.
We denote the sets of $d\times d$ Hermitian matrices, Hermitian PSD matrices, and Hermitian positive definite matrices by $\H^d$, $\H_+^d$, and $\H_{++}^d$, respectively.
We denote the relative interior of a set $S$ by $\ri S$.
We denote the $i$-th entry of a vector $v$ by $v(i)$.
We denote the conjugate transpose of a matrix $U$ by $U^\ast$.
We denote the sum of time-indexed matrices $A_1, \ldots, A_t \in \H^d$ by $A_{1: t}$.
We define the domain of a function $f:\H^d\to\R\cup\{ \infty \}$ by $\smash{\dom f\coloneqq\{ \rho\in\H^d \mid f(\rho)<\infty \}}$.

\ifdefined\arxiv
\section{Related Work} 
\else
\section{RELATED WORK}
\fi
\label{sec:related_work}

The relationships between this work and the works of \citet{Tsai2023b} and \citet{Cutkosky2019a} have been addressed in Section~\ref{sec:introduction}.
This section focuses on optimization algorithms.

Although standard optimization methods are not suitable for solving the problems \eqref{eq:quantum_problem} and \eqref{eq:classical_problem}, several methods with clear complexity guarantees have been proposed in the last decade.
Table~\ref{tab:time_complexity_comparison} summarizes existing results.
We focus on batch methods below as stochastic methods have already been discussed in Section~\ref{sec:introduction}.

QEM \citep{Lin2021a} is the current theoretically fastest batch method that solves the optimization problem \eqref{eq:quantum_problem} with clear complexity guarantees.
Its classical counterpart EM was proposed by \citet{Shepp1982a} and \citet{Cover1984a} independently.
While NoLips, QEM, and Frank-Wolfe have clear complexity guarantees,
their time complexities scale at least linearly with the sample size, which is undesirable when the sample size is large.


Other batch methods lack explicit complexity guarantees and, as a result, are not comparable to our algorithm.
For instance, the convergence rates of proximal gradient methods \citep{Tran-Dinh2015a} and several variants of the Frank-Wolfe method \citep{Dvurechensky2023a,Carderera2021a,Liu2022a} involve unknown parameters.
Diluted iterative MLE (iMLE, \citet{Rehacek2007a,Goncalves2014a}) and entropic mirror descent (EMD) with Armijo line search \citep{Li2019a} are only guaranteed to converge asymptotically.
Ordered-subset EM \citep{Hudson1994a} for PET and iMLE \citep{Lvovsky2004a} for ML quantum state tomography are commonly used heuristics but do not converge in general \citep{Hudson1994a,Rehacek2007a}.

\ifdefined\arxiv
\section{Applications}
\else
\section{APPLICATIONS}
\fi

\subsection{Kelly's Criterion} \label{sec:kelly}
Denote by $\Delta_d$ the probability simplex in $\R^d$. 
Consider long-term investment in a market with $d$ investment alternatives.
Let $\{ a_t \}$ be a stochastic process taking values in $[0,\infty)^d$.
On day $t$, the investor first selects a portfolio $x_t\in\Delta_d$ that indicates the distribution of their assets among the investment alternatives. 
Then, the investor observes $a_t$ that provides the price relatives of the investment alternatives for that day. 
The investor's goal is to maximize the wealth growth rate.

Kelly's criterion suggests choosing $x_{t+1}$ by maximizing the expected logarithmic loss
conditional on the past \citep{Algoet1988a}, i.e.,
\[
	x_{t+1} \in \argmin_{x\in\Delta_d} \E_{a_{t+1}}[ -\log\braket{a_{t+1}, x} | a_1,\ldots, a_t],
\]
which requires solving the classical setup \eqref{eq:classical_problem}.

\subsection{Poisson Inverse Problem} \label{sec:pip}
In a Poisson inverse problem, our goal is to recover an unknown signal $\smash{\lambda^\natural\in [0,\infty)^d}$ based on $n$ independent measurement outcomes $\{y_i\}$.
Each outcome $y_i$ follows a Poisson distribution with mean $\braket{b_i,\lambda^\natural}$, where $b_i\in[0,\infty)^d$ is known and depends on the measurement setup.
In positron emission tomography, $\lambda^\natural(i)$ represents the emitter density of the $i$-th region, and $y_i$ represents the number of photons detected by the $i$-th sensor.

The ML estimate is given by \citep{Shepp1982a}
\begin{equation} \label{eq:pip}
\hat{\lambda}\in\argmin_{\lambda\in[0,\infty)^d } \sum_{i=1}^n (\braket{b_i, \lambda} - y_i \log\braket{b_i,\lambda}).
\end{equation}
\citet{Vardi1998a} and \citet{Ben-Tal2001a} showed that by setting
\[
	Y = \sum_{i=1}^n y_i,\quad \hat{\lambda}(i) = \frac{Y \hat{x}(i)}{\sum_{j=1}^n a_j(i)}, \quad \forall i\in\brackets{n},
\]
and
\[
	a_i(j) = \frac{Yb_i(j)}{\sum_{k=1}^n b_k(j)}, \quad \forall i \in \brackets{n},\ j \in \brackets{d},
\]
the ML estimate can be reformulated as
\[
	\hat{x} \in \argmin_{x\in\Delta_d} \sum_{i=1}^n -\frac{y_i}{Y} \log \braket{a_i, x},
\]
which is equivalent to the classical setup \eqref{eq:classical_problem} with $P'(a = a_i) = y_i/Y$ for all $i\in\brackets{n}$.

%

\subsection{ML Quantum State Tomography} \label{sec:mlqst}
A quantum state is described by a density matrix $\rho\in\D_d$, which is a $d\times d$ Hermitian PSD matrix of unit trace.
For a state consisting of $q$ qubits, $d$ equals $2^q$.
Denote by $\D_d$ the set of density matrices.
The set $\D_d$ can be regarded as a quantum generalization of the probability simplex $\Delta_d$, in the sense that the vector of eigenvalues of any $\rho\in\D_d$ lies in $\Delta_d$.


Given $n$ measurement outcomes from an unknown quantum state $\rho^\natural$, ML estimation is a standard and widely used approach to estimate $\rho^\natural$ \citep{Hradil1997a,Haffner2005a,Palmieri2020a,Brown2023a}.
The ML estimate is given by
\[
	\hat{\rho}_{\text{ML}} \in \argmin_{\rho\in\D_d} \frac{1}{n} \sum_{i=1}^n -\log\tr( A_i \rho ),
\]
for some known $A_i\in\H_+^d$ related to the $i$-th measurement outcome.
Note that computing $\hat{\rho}_{\text{ML}}$ requires solving the quantum setup \eqref{eq:quantum_problem} in the finite-sum setting with sample size $n$.

\subsection{PSD Matrix Permanents} \label{sec:permanent}
The permanent of a matrix $A\in\C^{d\times d}$ is defined as
\[
	\per A \coloneqq \sum_{\pi\in S_d} \sum_{i=1}^d A_{i,\pi(i)},
\]
where $S_d$ is the set of all permutations of $\brackets{d}$.
Let $\{ v_i \}_{i=1}^d$ be the eigenvectors of $A$.
\citet{Yuan2022a} proposed the following approximation of $\per A$ when $A\in\H_+^d$:
\[
	\rel A \coloneqq \max_{\rho\in\D_d} \prod_{i=1}^d \tr((dv_i v_i^\ast)\rho).
\]
The approximation is equivalent to the quantum setup \eqref{eq:quantum_problem} in the finite-sum setting with $A_i = dv_iv_i^\ast$.
As noted by \citet{Meiburg2022a}, $\rel A$ achieves the currently tightest approximation ratio of $4.85^d$.

\ifdefined\arxiv
\section{Characterizations of Logarithmic Loss}
\else
\section{CHARACTERIZATIONS OF LOGARITHMIC LOSS}
\fi
\label{sec:properties}
This section aims to address the aforementioned lack of Lipschitz continuity and smoothness in the logarithmic loss.
We first set up a few notations.
For $\rho\in\H_{++}^d$, let 
\begin{equation} \label{eq:log_barrier}
	h(\rho)\coloneqq-\log\det\rho
\end{equation}
be the \textit{logarithmic barrier}.
Let $\smash{\norm{\cdot}_\rho \coloneqq (D^2 h(\rho)[\cdot,\cdot])^{1/2}}$ be the \textit{local norm} associated with $h$ at $\rho\in\dom h$.
The following lemma gives explicit formulae for the local norm and its dual norm.
The proof is deferred to Appendix~\ref{app:local_norm}.

\begin{lemma}
For $\rho\in\H^d_{++}$ and $X\in\H^d$, the local norm and its dual norm associated with $h$ are given by
\begin{equation} \label{eq:local_norm}
\begin{split}
	&\norm{X}_\rho = \sqrt{\tr((\rho^{-1/2}X\rho^{-1/2})^2)} = \sqrt{\tr((\rho^{-1}X)^2)}, \\
	&\norm{X}_{\rho,\ast} = \sqrt{\tr((\rho^{1/2}X\rho^{1/2})^2)} = \sqrt{\tr((\rho X)^2)}.
\end{split}
\end{equation}
\end{lemma}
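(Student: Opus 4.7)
The plan is to compute $D^2 h(\rho)[X,X]$ directly from the definition $h(\rho) = -\log\det\rho$ and then invert the induced Hessian operator to obtain the dual norm. Everything is standard linear-algebraic manipulation, with care taken around the cyclic property of the trace.

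First, I would recall that for $\rho \in \H^d_{++}$ one has $D(\log\det)(\rho)[X] = \tr(\rho^{-1}X)$ and $D(\rho^{-1})[X] = -\rho^{-1}X\rho^{-1}$. Combining these via the product rule gives
\[
	D^2 h(\rho)[X,Y] \;=\; \tr(\rho^{-1} X \rho^{-1} Y).
\]
Setting $Y=X$ yields $\norm{X}_\rho^2 = \tr((\rho^{-1}X)^2)$. To obtain the equivalent symmetric form, I would insert $\rho^{-1} = \rho^{-1/2}\rho^{-1/2}$ twice and apply the cyclic property:
\[
	\tr(\rho^{-1} X \rho^{-1} X) \;=\; \tr\bigl(\rho^{-1/2}X\rho^{-1/2}\cdot\rho^{-1/2}X\rho^{-1/2}\bigr) \;=\; \tr\bigl((\rho^{-1/2}X\rho^{-1/2})^2\bigr).
\]

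For the dual norm, I would identify the Hessian operator $H_\rho:\H^d \to \H^d$ by the Hilbert--Schmidt relation $\langle H_\rho X, Y\rangle = D^2 h(\rho)[X,Y] = \tr(\rho^{-1}X\rho^{-1}Y)$, so $H_\rho X = \rho^{-1} X \rho^{-1}$. This map is invertible on $\H^d$ with inverse $H_\rho^{-1} X = \rho X \rho$, which one can verify directly: $H_\rho(\rho X \rho) = \rho^{-1}(\rho X \rho)\rho^{-1} = X$. By definition of the dual local norm,
\[
	\norm{X}_{\rho,\ast}^2 \;=\; \langle X, H_\rho^{-1} X\rangle \;=\; \tr(X\rho X\rho) \;=\; \tr((\rho X)^2),
\]
and inserting $\rho = \rho^{1/2}\rho^{1/2}$ and shifting cyclically gives the symmetric form $\tr((\rho^{1/2}X\rho^{1/2})^2)$.

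The only subtlety is checking that the Hessian inverse I wrote down is indeed the correct dual-norm operator rather than just a one-sided inverse; that reduces to the identity $H_\rho(\rho X\rho)=X$ above, which is immediate. I do not anticipate a main obstacle here — the work is entirely bookkeeping with matrix derivatives and the cyclic property of the trace, and the result is a routine specialization of the well-known Hessian of $-\log\det$ used throughout interior-point theory.
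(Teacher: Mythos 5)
Your proof is correct. For the local norm you differentiate $-\log\det$ twice, exactly as the paper does, so there is no real difference there. For the dual norm, however, you take a genuinely different route. The paper works directly from the definition $\norm{X}_{\rho,\ast} = \sup_{\norm{\tau}_\rho = 1} \abs{\tr(X\tau)}$: it inserts $\rho^{\pm 1/2}$ factors, applies Cauchy--Schwarz for the trace inner product to get the upper bound $\sqrt{\tr((\rho^{1/2}X\rho^{1/2})^2)}$, and then exhibits the explicit maximizer $\tau = \rho X\rho / \sqrt{\tr((\rho^{1/2}X\rho^{1/2})^2)}$ to show the bound is tight. You instead identify the Hessian as the self-adjoint positive-definite map $H_\rho X = \rho^{-1}X\rho^{-1}$ on $\H^d$, invert it to $H_\rho^{-1}X = \rho X\rho$, and invoke the standard fact that the dual of the norm $Y \mapsto \sqrt{\braket{H_\rho Y, Y}}$ is $X \mapsto \sqrt{\braket{H_\rho^{-1}X, X}}$. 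Both are valid; your argument is more conceptual and shorter once the duality lemma is granted, while the paper's is entirely self-contained and produces the maximizer as a byproduct (the same $\rho X\rho$ that appears as your inverse image, which is a nice consistency check). The only thing you hand-wave slightly is the duality identity itself --- it does require $H_\rho$ to be self-adjoint and positive definite in the Hilbert--Schmidt inner product, both of which hold here and are worth one line to verify.
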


\subsection{``Lipschitz Continuity''}
A continuously differentiable function $f:\H^d\to\R$ is said to be $G$-Lipschitz with respect to a norm $\norm{\cdot}$ if its gradients are bounded by $G$ in the dual norm, i.e., $\norm{\nabla f(\rho)}_\ast \leq G$.

Although the loss function is not Lipschitz, Lemma~\ref{lem:lipschitz} below shows that $\nabla f$ is bounded in the dual local norm associated with $h$.
This Lipschitz-type property enables us to control the distance between iterates and exploit local properties of the loss function, in particular, the local smoothness property of self-concordant functions (Theorem~\ref{thm:self_concordant_smoothness}).

Lemma~\ref{lem:lipschitz} is a simple quantum generalization of Lemma~4.3 of \citet{Tsai2023b}.
Its proof is deferred to Appendix~\ref{app:lipschitz}.

\begin{lemma}\label{lem:lipschitz}
Let $f$ be defined in the quantum setup \eqref{eq:quantum_problem}.
Then, $\norm{\nabla f(\rho)}_{\rho,\ast}\leq 1$ for all $\rho\in\H_{++}^d$.
\end{lemma}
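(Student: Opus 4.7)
The plan is to compute the gradient explicitly, apply Jensen's inequality to reduce the expectation to a single-sample bound, and then exploit positive semidefiniteness to finish with an elementary trace inequality.

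First I would compute the gradient. Since $-\log \tr(A\rho)$ has gradient $-A/\tr(A\rho)$ with respect to $\rho$, differentiating under the expectation (which is routine as long as $\rho \in \H^d_{++}$, so that $\tr(A\rho) > 0$ on $\supp P$) gives
\[
\nabla f(\rho) \;=\; -\,\E_{A\sim P}\!\left[\frac{A}{\tr(A\rho)}\right].
\]
Since $\norm{\cdot}_{\rho,\ast}$ is a norm, Jensen's inequality yields
\[
\norm{\nabla f(\rho)}_{\rho,\ast} \;\leq\; \E_{A\sim P}\!\left[\left\|\frac{A}{\tr(A\rho)}\right\|_{\rho,\ast}\right],
\]
so it suffices to prove $\norm{A/\tr(A\rho)}_{\rho,\ast} \leq 1$ for every fixed PSD $A$ (with $A\neq 0$; the zero case is trivial).

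Next I would plug into the closed form of the dual local norm from \eqref{eq:local_norm}. Writing $M \coloneqq \rho^{1/2} A \rho^{1/2}$, which is PSD since $A$ is PSD, I get
\[
\norm{A}_{\rho,\ast}^2 \;=\; \tr\!\bigl((\rho^{1/2} A \rho^{1/2})^2\bigr) \;=\; \tr(M^2),
\qquad
\tr(A\rho) \;=\; \tr(\rho^{1/2} A \rho^{1/2}) \;=\; \tr M.
\]
The desired inequality thus reduces to the purely matrix-theoretic claim $\tr(M^2) \leq (\tr M)^2$ for any PSD $M$.

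Finally I would verify this last inequality by diagonalizing: if $M$ has nonnegative eigenvalues $\mu_1,\ldots,\mu_d \geq 0$, then
\[
(\tr M)^2 \;=\; \Bigl(\sum_i \mu_i\Bigr)^2 \;=\; \sum_i \mu_i^2 + 2\sum_{i<j}\mu_i\mu_j \;\geq\; \sum_i \mu_i^2 \;=\; \tr(M^2).
\]
Combining the steps gives $\norm{A/\tr(A\rho)}_{\rho,\ast} \leq 1$ for each $A$ in the support, and Jensen delivers the bound on $\norm{\nabla f(\rho)}_{\rho,\ast}$. There is no real obstacle here; the only step to handle with any care is the interchange of gradient and expectation, which is justified by the strict positivity of $\tr(A\rho)$ on $\H^d_{++}$, together with a standard dominated-convergence argument.
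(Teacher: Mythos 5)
Your proof is correct and follows essentially the same route as the paper: both apply Jensen's inequality to move the expectation outside (the paper applies it to the squared norm via convexity of $\tr(\cdot^2)$, you apply it to the norm itself), and both then finish with the elementary trace inequality $\tr(M^2) \leq (\tr M)^2$ for PSD $M = \rho^{1/2}A\rho^{1/2}$.
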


\subsection{``Smoothness''}
A continuously differentiable function $f:\H^d\to\R$ is said to be $L$-smooth with respect to a norm $\norm{\cdot}$ if its gradient is $L$-Lipschitz with respect to $\norm{\cdot}$, i.e.,
\[
	\norm{\nabla f(\rho) - \nabla f(\rho')}_\ast \leq L\norm{\rho-\rho'},\quad \forall \rho,\rho'\in\H^d.
\]
Lemma~\ref{lem:standard_self_bounding}, known as the self-bounding property, is a consequence of smoothness \citep{Srebro2010a}. 
A proof of Lemma~\ref{lem:standard_self_bounding} can be found in Lemma~4.23 of \citet{Orabona2023a}.

\begin{lemma} \label{lem:standard_self_bounding}
Let $f:\R^d\to\R$ be $L$-smooth with respect to $\norm{\cdot}$ with $\dom f=\R^d$.
Then, for any $x\in\R^d$, it holds that
\[
	\norm{ \nabla f(x) }_\ast^2 \leq 2L \left( f(x) - \inf_{x'\in\R^d} f(x') \right).
\]
\end{lemma}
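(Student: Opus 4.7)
The plan is to combine the descent lemma that follows from $L$-smoothness with a one-step ``steepest descent'' argument in the dual norm. First I would invoke the standard consequence of $L$-smoothness: for any $x, y \in \R^d$,
\[
f(y) \leq f(x) + \langle \nabla f(x), y - x \rangle + \frac{L}{2}\norm{y-x}^2.
\]
This quadratic upper bound is the workhorse of the proof.

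Next, I would choose $y$ so as to make the linear term as negative as possible relative to the quadratic penalty. By definition of the dual norm, there exists a vector $v \in \R^d$ with $\norm{v} = 1$ and $\langle \nabla f(x), v \rangle = -\norm{\nabla f(x)}_\ast$ (if no maximizer exists exactly, take a sequence approaching the supremum and pass to the limit at the end). Setting $y = x + t v$ for $t > 0$ in the descent lemma gives
\[
f(x + tv) \leq f(x) - t \norm{\nabla f(x)}_\ast + \frac{L t^2}{2}.
\]
Minimizing the right-hand side over $t \geq 0$ yields the optimal choice $t^\star = \norm{\nabla f(x)}_\ast / L$, which produces
\[
f(x + t^\star v) \leq f(x) - \frac{\norm{\nabla f(x)}_\ast^2}{2 L}.
\]

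Finally, since $\dom f = \R^d$, the point $x + t^\star v$ is a valid competitor in the infimum, so $\inf_{x'} f(x') \leq f(x) - \norm{\nabla f(x)}_\ast^2/(2L)$, which rearranges to the claim. The only mild subtlety is justifying the existence of the unit vector attaining $\langle \nabla f(x), v \rangle = -\norm{\nabla f(x)}_\ast$; in finite dimensions this follows from compactness of the unit ball, and if one prefers to avoid this, one can take a sequence $v_n$ with $\langle \nabla f(x), v_n \rangle \to -\norm{\nabla f(x)}_\ast$ and let $n \to \infty$ after optimizing in $t$. No other step presents a real obstacle; the argument is essentially a one-line calculation once the descent lemma and the variational characterization of the dual norm are in hand.
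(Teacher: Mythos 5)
Your proof is correct and is the standard argument; the paper itself does not reprove the lemma but simply cites Lemma~4.23 of \citet{Orabona2023a}, which uses exactly this descent-lemma-plus-steepest-dual-direction calculation (and your handling of the $\nabla f(x)=0$ case and the finite-dimensional attainment of the dual norm are both fine).
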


Although the loss function is not smooth, Lemma~\ref{lem:self_bounding} below establishes a self-bounding-type property of the loss function.
As discussed in Section \ref{sec:introduction}, the lemma generalizes Lemma~4.7 of \citet{Tsai2023b} to the quantum setup, and greatly simplifies the proof therein.
The proof is deferred to Appendix~\ref{app:self_bounding}.

For any $\rho\in\ri\D_d$ and $X\in\H^d$, define
\begin{equation} \label{eq:alpha}
	\alpha_\rho(X) \coloneqq - \frac{\tr(\rho X\rho)}{\tr(\rho^2)} \in \argmin_{\alpha\in\R} \norm{X + \alpha I}_{\rho,\ast}^2.
\end{equation}

\begin{lemma} \label{lem:self_bounding}
Let $f$ be defined in the quantum setup \eqref{eq:quantum_problem}.
Then, for any $\rho\in\ri\D_d$, it holds that
\begin{equation*}
\norm{ \nabla f(\rho) + \alpha_\rho(\nabla f(\rho))I }_{\rho,\ast}^2 \leq 4\left( f(\rho) - \min_{\rho'\in\D_d} f(\rho') \right).
\end{equation*}
\end{lemma}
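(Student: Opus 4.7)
The plan is to upper bound $\|\nabla f(\rho)+\alpha_\rho(\nabla f(\rho))I\|_{\rho,*}^2$ by $\|\nabla f(\rho)+I\|_{\rho,*}^2$ (optimality of $\alpha_\rho$ makes this free) and then bound the latter by $4(f(\rho)-f^\star)$ using three ingredients: first-order optimality of the minimizer, a scalar self-concordance-style inequality for $-\log$, and a weighted Jensen estimate in the dual local norm. I will first assume the minimizer $\rho^\star$ lies in $\ri\D_d$ and address the boundary case at the end.

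First, from KKT: since $\rho^\star\in\ri\D_d$ minimizes $f$ on $\D_d$, there exists $\lambda^\star\in\R$ with $\nabla f(\rho^\star)=\lambda^\star I$. Pairing with $\rho^\star$ and using $\tr(\rho^\star\nabla f(\rho^\star))=-\E_A[\tr(\rho^\star A)/\tr(\rho^\star A)]=-1$ forces $\lambda^\star=-1$, so $\E_A[A/Y_A^\star]=I$ with $Y_A^\star\coloneqq\tr(A\rho^\star)$. Setting $Y_A\coloneqq\tr(A\rho)$ and $u_A\coloneqq Y_A/Y_A^\star$, this gives the moment identity $\E_A[u_A]=\tr(\rho\,\E_A[A/Y_A^\star])=\tr\rho=1$ and the algebraic decomposition
\[
\nabla f(\rho)+I \;=\; \E_A[A/Y_A^\star]-\E_A[A/Y_A] \;=\; \E_A\!\left[(u_A-1)\,A/Y_A\right].
\]

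Second, elementary calculus (differentiate and check on $(0,1]$ and $[1,\infty)$ separately) yields the pointwise bound $-\log u + u - 1 \geq (u-1)^2/(2\max(u,1))$ for every $u>0$. Applying this to $u_A$, taking expectations, and using $\E_A[u_A-1]=0$ to collapse $\E_A[-\log u_A+u_A-1]$ to $f(\rho)-f^\star$ gives
\[
f(\rho)-f^\star \;\geq\; \tfrac{1}{2}\,\E_A\!\left[(u_A-1)^2/\max(u_A,1)\right].
\]

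Third, for positive weights $w_A$ with mean $W$, a change of measure $\du Q/\du P = w_A/W$ combined with convexity of $\|\cdot\|_{\rho,*}^2$ gives $\|\E_A X_A\|_{\rho,*}^2 \leq W\,\E_A[\|X_A\|_{\rho,*}^2/w_A]$. I apply this to $X_A=(u_A-1)A/Y_A$ with $w_A=\max(u_A,1)$. The key pointwise bound $\|A/Y_A\|_{\rho,*}^2 = \tr((\rho^{1/2}A\rho^{1/2})^2)/Y_A^2 \leq 1$ follows from $\tr B^2\leq(\tr B)^2$ for $B\succeq 0$ applied to $B=\rho^{1/2}A\rho^{1/2}$ with $\tr B=Y_A$, while $\E_A[\max(u_A,1)]\leq 1+\E_A[u_A]=2$. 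Combining,
\[
\|\nabla f(\rho)+I\|_{\rho,*}^2 \;\leq\; 2\cdot 2(f(\rho)-f^\star) \;=\; 4(f(\rho)-f^\star),
\]
and the optimality of $\alpha_\rho(\nabla f(\rho))$ closes the argument.

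The main obstacle I anticipate is the boundary case $\rho^\star\in\partial\D_d$, in which KKT produces $\E_A[A/Y_A^\star]=I+\Lambda$ with $\Lambda\succeq 0$ and $\Lambda\rho^\star=0$, so $\E_A[u_A]=1+\tr(\rho\Lambda)>1$ in general and the clean moment identity breaks. I expect to handle this by restricting the analysis to the range of $\rho^\star$—on which the induced subproblem has an interior minimizer and the commutation $\Lambda\rho^\star=\rho^\star\Lambda=0$ makes the restriction lossless—or by an approximation/limit argument from inside $\ri\D_d$. The scalar inequality and the PSD trace bound are routine and require no further work.
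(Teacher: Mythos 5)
Your argument for the interior case ($\rho^\star\in\ri\D_d$) is correct and is a genuinely different route from the paper. The paper's proof is ``primal'': it combines relative smoothness of $f$ with respect to the log-barrier (Lemma~\ref{lem:relative_smoothness}) and local smoothness from self-concordance (Theorems~\ref{thm:self_concordant_smoothness}, \ref{thm:dikin_ellipsoid}) to obtain a descent inequality, and then plugs in a trial point $\rho'_\beta$ inside the Dikin ellipsoid; it never invokes any property of $\rho^\star$ beyond $\min f \le f(\rho'_\beta)$, so the location of the minimizer is irrelevant. Yours is ``dual'': it exploits first-order optimality at $\rho^\star$, reduces to the scalar estimate $-\log u + u - 1 \ge (u-1)^2/(2\max(u,1))$, and closes with a weighted Jensen / change-of-measure bound in the dual local norm. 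The scalar inequality, the $\tr B^2 \le (\tr B)^2$ step, the bound $\E_A[\max(u_A,1)]\le 2$, and the change-of-measure Jensen estimate are all correct, and the constant $4$ falls out exactly. When $\rho^\star$ is interior, the proof is complete.

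The boundary case is a genuine gap, and neither of the two fixes you sketch works as stated. Your chain needs the identity $\E_A[A/Y_A^\star]=I$ exactly: it is what makes $\E_A[u_A]=1$, collapses $\E_A[-\log u_A + u_A - 1]$ to $f(\rho)-f^\star$, and (after $\nabla f(\rho)+I=\E_A[(u_A-1)A/Y_A]$) lets you fall back on $\alpha_\rho$-optimality via $\norm{\nabla f(\rho)+\alpha_\rho I}_{\rho,\ast}\le\norm{\nabla f(\rho)+I}_{\rho,\ast}$. When $\rho^\star\in\partial\D_d$, KKT gives $\nabla f(\rho^\star)=\Lambda-I$ with $\Lambda\succeq 0$, $\Lambda\rho^\star=0$ (so $\E_A[A/Y_A^\star]=I-\Lambda$; your sign is reversed, and in fact $\E_A[u_A]=1-\tr(\rho\Lambda)\le 1$, not $\ge 1$). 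You can still push the chain through to obtain $\norm{\nabla f(\rho)-\nabla f(\rho^\star)}_{\rho,\ast}^2\le 4(f(\rho)-f^\star)$, but this does not imply the lemma: $-\nabla f(\rho^\star)=I-\Lambda$ is not a scalar multiple of $I$, so the optimality of $\alpha_\rho$ does not give $\norm{\nabla f(\rho)+\alpha_\rho I}_{\rho,\ast}\le\norm{\nabla f(\rho)-\nabla f(\rho^\star)}_{\rho,\ast}$, and the two left-hand sides are not comparable in general. The ``restrict to the range of $\rho^\star$'' idea fails because the statement is about $\nabla f(\rho)$ at full-rank $\rho\in\ri\D_d$, so passing to the sub-block changes the quantity being bounded. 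The approximation idea fails because your moment identity holds only at the exact minimizer; for nearby interior $\tilde\rho$ the gradient $\nabla f(\tilde\rho)$ is not a multiple of $I$, and regularizing (e.g.\ by $\delta h$) introduces a $\delta\tilde\rho^{-1}$ term that does not vanish uniformly. Since rank-deficient minimizers are common in ML quantum state tomography (near-pure target states), this case cannot be waved away, and the argument as written is incomplete.
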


\ifdefined\arxiv
\section{Algorithms and Convergence Guarantees}
\else
\section{ALGORITHMS AND CONVERGENCE GUARANTEES}
\fi
This section presents LB-SDA and its theoretical guarantee.
We focus on the quantum setup \eqref{eq:quantum_problem} since it includes the classical setup \eqref{eq:classical_problem} as a special case.

\subsection{Algorithm}

\begin{algorithm}[t!]
\caption{Stochastic Dual Averaging with the Logarithmic Barrier (LB-SDA) for the quantum setup}
\label{alg:sda}
\hspace*{\algorithmicindent} \textbf{Input: } A stochastic first-order oracle $\O$.
\begin{algorithmic}[1]
\STATE $h ( \rho ) \coloneqq -\log\det\rho$. 
\STATE $\rho_1 = I/d \in \argmin_{\rho\in\D_d} h(\rho)$.
\FORALL{$t \in \mathbb{N}$}
	\STATE Output $\bar{\rho}_t\coloneqq(1/t)\rho_{1:t}$.
	\STATE $g_t = \O(\rho_t)$.
	\STATE Compute a learning rate $\eta_t>0$.
	\STATE $\smash{ \rho_{t+1}\in\argmin_{\rho\in\D_d} \eta_t\tr(g_{1:t}\rho) + h(\rho) }$.
\ENDFOR
\end{algorithmic}
\end{algorithm}

LB-SDA is presented in Algorithm~\ref{alg:sda}, where $h$ is the logarithmic barrier \eqref{eq:log_barrier} and $\norm{\cdot}_\rho$ and $\norm{\cdot}_{\rho,\ast}$ are the local and dual local norms associated with $h$ at $\rho$ \eqref{eq:local_norm}, respectively.
A \textit{stochastic first-order oracle} is a randomized function $\O$ that outputs an unbiased estimate $\O(\rho)\in\H^d$ of the gradient $\nabla f(\rho)$ given an input $\rho\in\D_d$.

We will make the following assumptions on the stochastic first-order oracle.
It is notable that the boundedness is defined in terms of the dual local norm, which deviates from existing literature.

\begin{assumption} \label{ass:stochastic_gradients}
Conditional on the past, the stochastic gradients $\{g_t\}$ in Algorithm~\ref{alg:sda} are unbiased and bounded, and their variances are also bounded, i.e., for all $t\in\N$,
\begin{itemize}
\item $\E[g_t| \mathcal{H}_t] = \nabla f(\rho_t)$,
\item ${ \E\left[ \norm{g_t}_{\rho_t,\ast}^2 \big|\mathcal{H}_t \right] \leq G^2 }$,
\item ${ \E\left[\norm{g_t - \nabla f(\rho_t)}_{\rho_t,\ast}^2 \big| \mathcal{H}_t \right] \leq\sigma^2 }$,
\end{itemize}
where $\mathcal{H}_t = \{ g_1,\ldots, g_{t-1}, \rho_1,\ldots, \rho_t \}$ is the past information before obtaining $g_t$.
\end{assumption}

The unbiasedness and bounded variance assumptions are standard in the literature.
Regarding the bounded gradient assumption, by the triangle inequality and Lemma~\ref{lem:lipschitz},
\[
\begin{split}
	\norm{g_t}_{\rho_t,\ast}^2 &\leq (\norm{g_t - \nabla f(\rho_t)}_{\rho_t,\ast} + \norm{\nabla f(\rho_t)}_{\rho_t,\ast})^2 \\
	&\leq \norm{g_t - \nabla f(\rho_t)}_{\rho_t,\ast}^2 + 2\norm{g_t - \nabla f(\rho_t)}_{\rho_t,\ast} + 1.
\end{split}
\]
Taking expectations on both sides and using the inequality $\E X \leq \sqrt{\E[X^2]}$, we can verify that the bounded gradient assumption always holds with $G=1+\sigma$.
Nevertheless, since $G$ can be smaller than $1+\sigma$, we include the assumption for a tighter result.

An important example of the oracle is
\begin{equation} \label{eq:oracle_b}
	\O_B(\rho) \coloneqq \frac{1}{B}\sum_{b=1}^B \nabla \ell_b(\rho)
\end{equation}
where $B\in\N$, $\ell_b(\rho)\coloneqq-\log\tr(A_b\rho)$, and $A_1,\ldots,A_B$ are independently drawn from $P$.
The resulting algorithm is called \textit{$B$-sample LB-SDA}.
The following lemma justifies the use of $\O_B$, whose proof is deferred to Appendix~\ref{app:oracle}.

\begin{lemma} \label{lem:oracle_b}
The oracle $\O_B$ \eqref{eq:oracle_b} satisfies Assumption~\ref{ass:stochastic_gradients} with $G=1$ and $\sigma^2=4/B$.
\end{lemma}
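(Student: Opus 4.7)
The plan is to verify each of the three bullets of Assumption~\ref{ass:stochastic_gradients} in turn, building on the observation that $(X,Y)\mapsto \tr(\rho X \rho Y)$ is an inner product on $\H^d$ whose induced norm is exactly $\norm{\cdot}_{\rho,\ast}$ (symmetry follows from cyclicity of the trace, and positivity from $\tr(\rho X\rho X)=\tr((\rho^{1/2}X\rho^{1/2})^2)\ge 0$). This Hilbert-space viewpoint lets me apply the standard bias-variance decomposition for averages of i.i.d.\ random vectors.

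First I would record a per-sample bound. Viewing $\ell_b(\rho)=-\log\tr(A_b\rho)$ as the expected logarithmic loss against the point-mass distribution at $A_b$, Lemma~\ref{lem:lipschitz} immediately gives $\norm{\nabla\ell_b(\rho)}_{\rho,\ast}\le 1$ almost surely, and applied to $P$ itself, $\norm{\nabla f(\rho)}_{\rho,\ast}\le 1$. Unbiasedness then follows by linearity (exchanging $\E$ and $\nabla$ via dominated convergence on a neighborhood of $\rho$): conditional on $\mathcal{H}_t$, the matrices $A_1,\dots,A_B$ are i.i.d.\ from $P$ and independent of the fixed $\rho_t$, so $\E[g_t\mid\mathcal{H}_t]=\E[\nabla\ell_1(\rho_t)\mid\mathcal{H}_t]=\nabla f(\rho_t)$.

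For the second and third bullets, let $\mu\coloneqq\nabla f(\rho_t)$ and $X_b\coloneqq\nabla\ell_b(\rho_t)$, so that $Y_b\coloneqq X_b-\mu$ are conditionally i.i.d.\ and mean-zero in the inner-product space above. Expanding $\bigl\lVert(1/B)\sum_b Y_b\bigr\rVert_{\rho_t,\ast}^2$ and using the vanishing of cross terms under independence yields
\begin{equation*}
\E\!\left[\norm{g_t-\mu}_{\rho_t,\ast}^2 \,\middle|\,\mathcal{H}_t\right] = \frac{1}{B}\,\E\!\left[\norm{X_1-\mu}_{\rho_t,\ast}^2 \,\middle|\,\mathcal{H}_t\right].
\end{equation*}
The triangle inequality together with the per-sample bounds above gives $\norm{X_1-\mu}_{\rho_t,\ast}^2\le 2\norm{X_1}_{\rho_t,\ast}^2+2\norm{\mu}_{\rho_t,\ast}^2\le 4$, which delivers $\sigma^2\le 4/B$. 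For the second-moment bound I would use the sharper identity $\E[\norm{X_1-\mu}_{\rho_t,\ast}^2\mid\mathcal{H}_t]=\E[\norm{X_1}_{\rho_t,\ast}^2\mid\mathcal{H}_t]-\norm{\mu}_{\rho_t,\ast}^2$, valid because $\norm{\cdot}_{\rho_t,\ast}$ genuinely comes from an inner product, to obtain $\E[\norm{g_t}_{\rho_t,\ast}^2\mid\mathcal{H}_t]=\tfrac{B-1}{B}\norm{\mu}_{\rho_t,\ast}^2+\tfrac{1}{B}\E[\norm{X_1}_{\rho_t,\ast}^2\mid\mathcal{H}_t]\le 1$, giving $G=1$.

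No step looks technically deep; the only pitfall is using the crude triangle-inequality bound for the second moment as well, which would only yield $G^2\le 1+4/B$ rather than the sharp $G^2=1$. Exploiting the Hilbert-space structure of the local dual norm is exactly what avoids this loss.
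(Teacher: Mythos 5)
Your proof is correct and takes essentially the same route as the paper: both exploit the inner-product structure underlying $\norm{\cdot}_{\rho_t,\ast}$ to cancel cross terms in the variance of the average, and both bound the per-sample term by $4$ via the triangle inequality together with Lemma~\ref{lem:lipschitz} applied to each $\nabla\ell_b$ (viewed as the gradient of the log loss against a point mass). One small remark: the ``pitfall'' you warn against in the $G=1$ step is illusory, since $\norm{g_t}_{\rho_t,\ast}\le B^{-1}\sum_b\norm{\nabla\ell_b(\rho_t)}_{\rho_t,\ast}\le 1$ already holds pointwise by the triangle inequality, so the bias--variance identity, while valid, is not actually needed to obtain the sharp constant $G=1$.
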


\subsection{Convergence Guarantee}

The non-asymptotic convergence guarantee of Algorithm~\ref{alg:sda} is presented in Theorem~\ref{thm:sda} below.
The analysis follows the online-to-batch approach, where we use the following regret bound of \citet{Tsai2023b} in Appendix~\ref{app:olo}:
\[
	R_t \leq \tildeO\left( \sqrt{d \sum_{\tau=1}^t \norm{g_\tau + \alpha_{\rho_\tau}(g_\tau) I}_{\rho_\tau,\ast}^2  } \right)
	\leq \tildeO( \sqrt{ dt } ).
\]
Note that applying the online-to-batch conversion on the right upper bound can only yield a convergence rate of $\tildeO(\sqrt{d/t})$, independent of the variance $\sigma^2$.
Since the effect of batch size is unclear without the variance term, this direct approach fails to improve the time complexity guarantee.

Deriving an error bound involving a variance term typically requires smoothness of the loss function in the literature, and this is where the self-bounding-type property (Lemma~\ref{lem:self_bounding}) comes into play.
It bounds the square of the dual local norm by
\[
	\E\norm{g_\tau + \alpha_{\rho_\tau}(g_\tau) I}_{\rho_\tau,\ast}^2
	\leq 4\E\left[ f(\rho_\tau) - \min_{\rho\in\D_d} f(\rho) \right] + \sigma^2,
\]
which results in a ``self-bounding'' inequality of $\E R_t$:
\[
	\E R_t \leq \tildeO\left( \sqrt{d \E R_t + \sigma^2dt} \right)
\]
Our analysis can be seen as a local-norm extension of that of the anytime online-to-batch conversion \citep{Cutkosky2019a}.
The proof is deferred to Appendix~\ref{app:sda_theorem}.


%


\begin{theorem} \label{thm:sda}
Consider the quantum setup \eqref{eq:quantum_problem}.
Under Assumption~\ref{ass:stochastic_gradients}, let $\{\bar{\rho}_t\}$ be the iterates generated by Algorithm~\ref{alg:sda} with
\[
	\eta_t = \frac{\sqrt{d}}{\sqrt{\sum_{\tau=1}^t \norm{g_\tau + \alpha_{\rho_\tau}(g_\tau) I }_{\rho_\tau,\ast}^2 + 4dG^2 + G^2}}.
\]
Then, for all $t\in\N$, it holds that
\[
\setlength{\jot}{10pt}
\begin{split}
	&\E\left[ f(\bar{\rho}_t) - \min_{\rho\in\D_d} f(\rho) \right] \\
	&\qquad \leq
	\frac{4dC_t^3 + 2C_t\sqrt{\sigma^2 dt + 4d^2G^2 + dG^2} + 1}{t} \\ 
	&\qquad = O\left( \frac{dG(\log t)^3}{t} + \frac{\sigma\sqrt{d}\log t}{\sqrt{t}} \right),
\end{split}
\]
where $C_t \coloneqq \log t + 3$ and the expectation is taken with respect to $\{g_t\}$.
\end{theorem}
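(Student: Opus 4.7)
The plan is to follow the self-bounding roadmap outlined in the Technical Breakthroughs paragraph: first convert optimization error into a regret bound via online-to-batch, and then close a self-bounding loop using Lemma~\ref{lem:self_bounding}. By convexity of $f$, Jensen's inequality, and unbiasedness of $g_\tau$ conditional on $\mathcal{H}_\tau$ (the cross-term vanishes because $\rho_\tau$ and the minimizer $\rho^\star$ are $\mathcal{H}_\tau$-measurable),
\[
    \E\bigl[t(f(\bar\rho_t) - f^\star)\bigr] \leq \sum_{\tau=1}^t \E[f(\rho_\tau) - f^\star] \leq \E R_t, \quad R_t \coloneqq \sum_{\tau=1}^t \langle g_\tau, \rho_\tau - \rho^\star\rangle.
\]
Applying the regret bound recalled in Appendix~\ref{app:olo} with the prescribed adaptive $\eta_t$ then yields
\[
    R_t \leq C_t \sqrt{d(V_t + 4dG^2 + G^2)}, \quad V_t \coloneqq \sum_{\tau=1}^t \norm{g_\tau + \alpha_{\rho_\tau}(g_\tau) I}_{\rho_\tau,\ast}^2,
\]
where $C_t = O(\log t)$ collects logarithmic overhead and the $4dG^2 + G^2$ offset regularizes the early-iterate singularity of the step size.

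Transferring Lemma~\ref{lem:self_bounding}, which is stated for the true gradient $\nabla f(\rho_\tau)$, to the stochastic gradient appearing in $V_t$ requires a two-step inequality. Using the minimality of $\alpha_{\rho_\tau}(g_\tau)$ from~\eqref{eq:alpha} to replace it with the suboptimal but deterministic shift $\alpha_{\rho_\tau}(\nabla f(\rho_\tau))$, followed by the triangle inequality and $(a+b)^2 \leq 2a^2 + 2b^2$,
\[
    \norm{g_\tau + \alpha_{\rho_\tau}(g_\tau) I}_{\rho_\tau,\ast}^2 \leq 2\norm{g_\tau - \nabla f(\rho_\tau)}_{\rho_\tau,\ast}^2 + 2\norm{\nabla f(\rho_\tau) + \alpha_{\rho_\tau}(\nabla f(\rho_\tau)) I}_{\rho_\tau,\ast}^2.
\]
Taking conditional expectations, Assumption~\ref{ass:stochastic_gradients} controls the noise term by $2\sigma^2$ and Lemma~\ref{lem:self_bounding} controls the projected-gradient term by $8(f(\rho_\tau) - f^\star)$, so
\[
    \E V_t \leq 2t\sigma^2 + 8\sum_{\tau=1}^t \E[f(\rho_\tau) - f^\star] \leq 2t\sigma^2 + 8\E R_t.
\]
Substituting back (pulling the expectation inside the square root via Jensen's inequality) produces the self-bounding relation
\[
    \E R_t \leq C_t \sqrt{d\bigl(8\E R_t + 2t\sigma^2 + 4dG^2 + G^2\bigr)}.
\]
This is a quadratic-type inequality in $\sqrt{\E R_t}$; resolving it by squaring and applying the quadratic formula gives $\E R_t = O\bigl(dC_t^3 + C_t\sqrt{dt\sigma^2 + d^2G^2}\bigr)$, and dividing by $t$ matches the theorem's bound.

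The main obstacle is the self-bounding transfer above: Lemma~\ref{lem:self_bounding} controls the \emph{true} projected gradient, while the regret bound produces the \emph{stochastic} projected gradient, and the two are related only through the $\rho_\tau$-dependent dual local norm. The key idea is to exploit the $\argmin$ characterization of $\alpha_{\rho_\tau}$ to substitute a deterministic shift, so that the residual noise emerges in exactly the dual local norm that Assumption~\ref{ass:stochastic_gradients} bounds. A secondary point of care is the $\rho_\tau$-dependence of the local norm itself, which breaks the textbook online-to-batch argument and motivates the local-norm adaptation of Cutkosky's anytime online-to-batch analysis highlighted in the Technical Breakthroughs paragraph.
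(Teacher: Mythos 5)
Your roadmap is essentially the paper's: online-to-batch, plug in the FTRL regret bound to control the regret by $\sqrt{d V_t}$, decompose $\E V_t$ into a variance part and a self-bounded signal part via Lemma~\ref{lem:self_bounding}, and close the self-bounding recursion. However, there is one genuine gap in the argument.

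You run the regret bound against the true minimizer $\rho^\star$. But the FTRL bound (Corollary~\ref{cor:ftrl}) has the form
\[
  R_t(x) \leq \left(\frac{\varphi(x)}{D} + 2D\right)\sqrt{\textstyle\sum_\tau \norm{v_\tau}_{x_\tau,\ast}^2 + \cdots},
\]
with $\varphi = h = -\log\det(\cdot) + \mathrm{const}$. When $\rho^\star$ is singular --- which is the generic situation here (pure states in tomography, boundary solutions in the Poisson problem) --- we have $h(\rho^\star) = +\infty$, and your $R_t \leq C_t\sqrt{d(V_t + 4dG^2 + G^2)}$ is vacuous: the $C_t = O(\log t)$ factor you wrote implicitly absorbs $\varphi(\rho^\star)/\sqrt{d}$, which is unbounded. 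The paper avoids this by comparing instead to the shrunken point $\tilde\rho \coloneqq \frac{t}{t+1}\rho^\star + \frac{1}{t+1}\cdot\frac{I}{d}$, for which $h(\tilde\rho) \leq d\log(t+1)$, and paying the price $f(\tilde\rho) - f(\rho^\star) \leq 1/t$ (established by a one-line convexity computation). This interpolation is where the ``$+1$'' in the numerator of the theorem and the $\log t$ in $C_t$ actually come from, and it is what makes the regret bound finite; it also has to be threaded through the step where you upper-bound $\sum_\tau \E[f(\rho_\tau) - f^\star]$ by the regret, since $R_t(\rho^\star)$ is not controllable.

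Two smaller discrepancies worth noting. First, your bound $\norm{g_\tau + \alpha_{\rho_\tau}(g_\tau)I}_{\rho_\tau,\ast}^2 \leq 2\norm{g_\tau - \nabla f(\rho_\tau)}_{\rho_\tau,\ast}^2 + 2\norm{\nabla f(\rho_\tau) + \alpha_{\rho_\tau}(\nabla f(\rho_\tau))I}_{\rho_\tau,\ast}^2$ via minimality of $\alpha_{\rho_\tau}(g_\tau)$ and $(a+b)^2 \leq 2a^2+2b^2$ is correct but lossy: the paper instead uses the linearity of $\alpha_\rho(\cdot)$ together with the Frobenius-norm Pythagorean identity (after the congruence $X\mapsto\rho^{1/2}X\rho^{1/2}$) to get an exact conditional variance decomposition, costing no factor of $2$. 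Second, despite the remark in the introduction about the anytime conversion of Cutkosky, the paper's proof runs through the \emph{standard} online-to-batch conversion (Algorithm~\ref{alg:online_to_batch}, Theorem~\ref{thm:online_to_batch}), and the recursion is stated in $\max_{1\le\tau\le t}\E R_\tau(\tilde\rho)$, picking up an extra $(1+\log t)$ factor; that is where its $dC_t^3$ term comes from, versus the $dC_t^2$ your tighter recursion would give. Your sharper recursion is fine once the $\tilde\rho$ fix is in place, but the boundary-comparator issue must be addressed for the argument to be valid.
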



Plugging in the estimates in Lemma~\ref{lem:oracle_b}, we obtain the following result for the mini-batch case. 

\begin{corollary} \label{cor:sda}
Consider the quantum setup \eqref{eq:quantum_problem}.
Let $\{\bar{\rho}_t\}$ be the iterates of B-sample LB-SDA with learning rates
\[
	\eta_t = \frac{\sqrt{d}}{\sqrt{\sum_{\tau=1}^t \norm{g_\tau + \alpha_{\rho_\tau}(g_\tau) I }_{\rho_\tau,\ast}^2 + 4d + 1}}.
\]
Then, for all $t\in\N$, it holds that
\[
	\E\left[ f(\bar{\rho}_t) - \min_{\rho\in\D_d} f(\rho) \right]
	= O\left( \frac{d(\log t)^3}{t} + \frac{\sqrt{d}\log t}{\sqrt{Bt}} \right).
\]
where the expectation is taken with respect to $\{g_t\}$.
%
\end{corollary}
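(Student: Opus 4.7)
The plan is to derive Corollary~\ref{cor:sda} as a direct specialization of Theorem~\ref{thm:sda} to the mini-batch oracle $\O_B$ defined in~\eqref{eq:oracle_b}. By definition, $B$-sample LB-SDA is Algorithm~\ref{alg:sda} instantiated with $\O = \O_B$, so the first step is to invoke Lemma~\ref{lem:oracle_b}, which certifies that this oracle satisfies Assumption~\ref{ass:stochastic_gradients} with parameters $G = 1$ and $\sigma^2 = 4/B$. This makes Theorem~\ref{thm:sda} directly applicable.

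Next, I would verify that the learning-rate formula in Corollary~\ref{cor:sda} is exactly the one prescribed by Theorem~\ref{thm:sda} under $G=1$. Since $4 d G^2 + G^2 = 4d + 1$ when $G = 1$, the denominator $\sqrt{\sum_{\tau=1}^t \norm{g_\tau + \alpha_{\rho_\tau}(g_\tau) I}_{\rho_\tau,\ast}^2 + 4dG^2 + G^2}$ reduces to $\sqrt{\sum_{\tau=1}^t \norm{g_\tau + \alpha_{\rho_\tau}(g_\tau) I}_{\rho_\tau,\ast}^2 + 4d + 1}$, matching the statement of the corollary verbatim. Thus $B$-sample LB-SDA with the stated schedule coincides with a valid instance of Algorithm~\ref{alg:sda} under the hypotheses of Theorem~\ref{thm:sda}.

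Finally, I would substitute $G = 1$ and $\sigma = 2/\sqrt{B}$ into the asymptotic bound
\[
  \E\!\left[ f(\bar{\rho}_t) - \min_{\rho\in\D_d} f(\rho) \right]
  = O\!\left( \frac{dG(\log t)^3}{t} + \frac{\sigma\sqrt{d}\log t}{\sqrt{t}} \right)
\]
supplied by Theorem~\ref{thm:sda}. The first summand collapses to $O(d(\log t)^3/t)$, and the second becomes $O((2/\sqrt{B})\sqrt{d}\log t/\sqrt{t}) = O(\sqrt{d}\log t / \sqrt{Bt})$, which is precisely the bound claimed in Corollary~\ref{cor:sda}. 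There is no genuine obstacle: once Lemma~\ref{lem:oracle_b} is invoked, the corollary is a routine substitution, and the only care required is to confirm that the learning-rate expressions align after plugging in $G=1$ and that the $1/\sqrt{B}$ dependence from $\sigma$ is transmitted correctly into the stochastic term of the bound.
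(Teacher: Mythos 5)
Your proposal is correct and matches the paper's proof exactly: the paper obtains Corollary~\ref{cor:sda} by plugging the estimates $G=1$ and $\sigma^2 = 4/B$ from Lemma~\ref{lem:oracle_b} into Theorem~\ref{thm:sda}, which is precisely what you do. The learning-rate check ($4dG^2+G^2 = 4d+1$) and the substitution $\sigma = 2/\sqrt{B}$ into the asymptotic bound are both handled correctly.
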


\begin{remark}
Proving high-probability guarantees for $B$-sample LB-SDA is challenging since the logarithmic loss violates the boundedness assumption required by standard analysis \citep{Orabona2023a}.
We left this extension as a future research direction.
\end{remark}


\subsection{Time Complexity Analysis} \label{sec:time_complexity}

This section discusses the time complexity of $B$-sample LB-SDA.
Comparisons of time complexities of existing first-order methods have been presented in Table~\ref{tab:time_complexity_comparison} and discussed in Section~\ref{sec:introduction}.

First, note that the 6th line in Algorithm~\ref{alg:sda} cannot be solved exactly.
Nevertheless, after an eigendecomposition, which takes $\tildeO(d^\omega)$ time \citep{Demmel2007a}, the 6th line reduces to an one-dimensional convex optimization problem, which can be efficiently solved by Newton's method on the real line in $\tildeO(d)$ time (see, e.g., Appendix~A.2 of \citet{Nesterov2018a}).
As a result, the time complexity of the 6th line is $\tildeO(d^\omega)$.
Second, the time complexity of the 5th line, which requires implementing the oracle $\O_B$, is $O(Bd^2)$.
Lastly, since the 5th and the 6th lines are the most time-consuming parts, the per-iteration time complexity of $B$-sample LB-SDA is $O(Bd^2+d^\omega)$.

By Corollary~\ref{cor:sda}, the iteration complexity of $B$-sample LB-SDA to obtain an $\varepsilon$-optimal solution is $\tildeO(d/(B\varepsilon^2))$.
Combining with the per-iteration time complexity, the overall time complexity is $\tildeO(d^3/\varepsilon^2 + d^{\omega+1}/(B\varepsilon^2))$.
In particular, the overall time complexity is $\tildeO(d^3/\varepsilon^2)$ when $B = \Omega(d^{\omega-2})$.
Since $\omega$ is $3$ in practical implementation \citep{Dongarra1990a}, we will often choose $B=d$.


Since the eigendecomposition is no longer needed in the classical setup, the per-iteration time complexity of $B$-sample LB-SDA is reduced to $\tildeO(Bd)$.
Because the iteration complexity of obtaining an $\varepsilon$-optimal solution is $\tildeO(d/(B\varepsilon^2))$, the overall time complexity is $\tildeO(d^2/\varepsilon^2)$ for any $B\in\N$ in the classical setup.
\ifdefined\arxiv
\section{Numerical Results}
\else
\section{NUMERICAL RESULTS} 
\fi
\label{sec:numerical_results}

We have shown that LB-SDA achieves the currently best time complexity guarantees in the previous section.
In this section, we show that LB-SDA also performs well empirically.
We consider solving the Poisson inverse problem and computing the ML estimate for quantum state tomography.
All results in this section are presented in terms of the elapsed time.
Results in terms of the number of iterations can be found in Appendix~\ref{app:add_numerical_results}.

Both experiments were conducted on a machine with an Intel Xeon Gold 5218 CPU of 2.30GHz and 131,621,512kB memory.
The elapsed time records the actual running time of the method on the machine.
All methods are implemented in the Julia programming language \citep{Bezanson2017a} with the Intel Math Kernel Library, and the number of threads in BLAS is set to $8$.
It is important to note that the empirical speed is highly dependent on the specific implementations.
The source code of the experiments is available at \url{https://github.com/chungentsai/pip} and \url{https://github.com/chungentsai/mlqst} for the Poisson inverse problem and ML quantum state tomography, respectively.

The approximate optimization error at an iterate is defined as the difference between its function value and the smallest one obtained in the experiments.

\subsection{Poisson Inverse Problem}
\label{sec:numerical_pip}

Consider the Poisson inverse problem in Section~\ref{sec:pip} with a synthetic dataset, where $d$ equals $256$ and $n$ equals $1,000,000$.
The unknown signal $\lambda^\natural$ is $1,000$ times the gray intensities of the Shepp-Logan phantom image \citep{Shepp1974a} of size $16\times 16$.
The signal is presented in Appendix~\ref{app:add_numerical_results}.
The vectors $\{ b_i \}$ are generated following the scheme of \citet{Raginsky2010a}.
Each entry of $b_i$ is assigned to either $0$ or $1/n$ with equal probability.

We consider all algorithms in Table~\ref{tab:time_complexity_comparison} that have explicit complexity guarantees.
EM \citep{Shepp1982a}, SSB \citep{Li2020a}, and SLBOMD \citep{Tsai2022a} are the classical counterparts of QEM, SQSB, and SQLBOMD, respectively.
Additionally, we include SPDHG \citep{Chambolle2018a} and EMD with Armijo line search \citep{Li2018a} for comparison.
The former is well-known in practice and the latter is known to converge fast empirically.
However, they are only guaranteed to converge asymptotically.
Their parameters are set according to the cited works.
We do not include batch PDHG as it is slow in practice.

We solve the Poisson inverse problem based on the equivalence between it and the classical setup \eqref{eq:classical_problem} in Section~\ref{sec:pip}.
\begin{figure}[t]
\caption{Performances of all algorithms in Table~\ref{tab:time_complexity_comparison}, SPDHG, and EMD with line search for solving the Poisson inverse problem.}
\label{fig:pip}
\centering

\ifdefined\arxiv
	\begin{subfigure}{0.484\columnwidth}
	\centering
	\caption{Normalized estimation error versus the elapsed time.}
	\includegraphics[width=\columnwidth]{./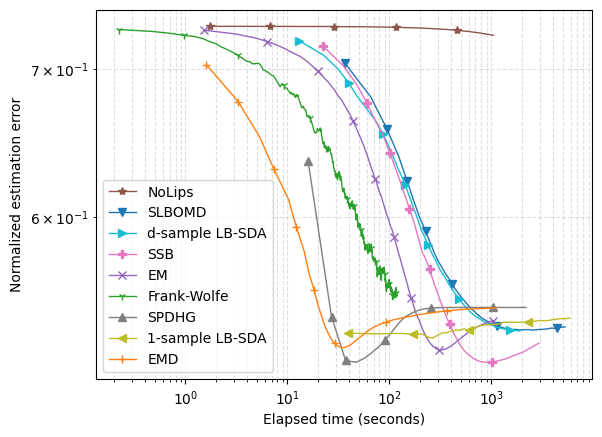}
	\end{subfigure}%
	\hfill
	\begin{subfigure}{0.48\columnwidth}
	\centering
	\caption{Approximate optimization error versus the elapsed time.}
	\includegraphics[width=\columnwidth]{./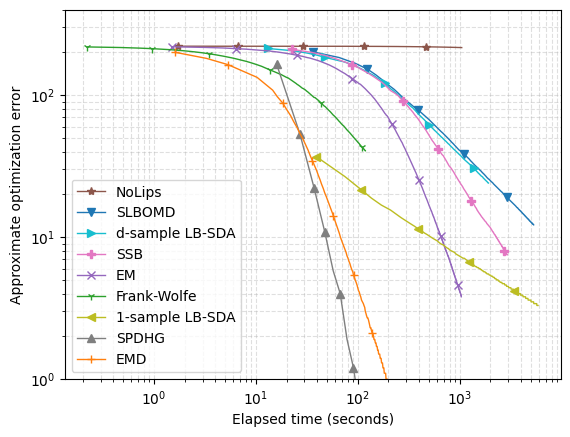}
	\end{subfigure}%
\else
	\begin{subfigure}{\columnwidth}
	\centering
	\caption{Normalized estimation error versus the elapsed time.}
	\includegraphics[width=\columnwidth]{./figures/pip/phantom16/time-distance.png}
	\end{subfigure}%
	\vfill
	\begin{subfigure}{\columnwidth}
	\centering
	\caption{Approximate optimization error versus the elapsed time.}
	\includegraphics[width=\columnwidth]{./figures/pip/phantom16/time-error.png}
	\end{subfigure}%
\fi

\end{figure}

Figure~\ref{fig:pip} presents the numerical results.
For an iterate $\hat{\lambda}$, the normalized estimation error is defined as $\norm{\hat{\lambda} - \lambda^\natural}_2 / \norm{\lambda^\natural}_2$.
Since the goal of the Poisson inverse problem is to recover the unknown signal $\lambda^\natural$, rather than minimizing the loss function, results presented in terms of the normalized estimation error is more important than results presented in terms of the optimization error.

Observe that $1$-sample LB-SDA outperforms all methods with explicit complexity guarantees in terms of the normalized estimation error.
Although it is slower than EMD with line search and SPDHG, the latter two methods are only guaranteed to converge asymptotically, whereas LB-SDA has an explicit non-asymptotic complexity guarantee.

LB-SDA converges faster than SLBOMD and SSB in terms of the optimization error, although they have the same theoretical time complexity of $\tildeO(d^2/\varepsilon^2)$.
This can be explained by the use of time-varying learning rates in LB-SDA, in contrast to the fixed learning rates used by the other two methods.
The time-varying learning rates are large at the beginning, which leads to a fast convergence in practice.

\subsection{ML Quantum State Tomography}
\label{sec:numerical_mlqst}

Consider the problem of ML quantum state tomography in Section~\ref{sec:mlqst}.
We construct a synthetic dataset, following the setup of \citet{Haffner2005a}.
The number of qubits $q$ is $6$, the dimension $d$ is $2^6=64$, and the sample size $n$ is $409,600$.
The unknown quantum state is the $W$ state, which corresponds to a rank-$1$ density matrix.
The Hermitian matrices $\{ A_i \}$ are generated following the procedure of \citet{Lin2021a}, where each $A_i$ is of rank $d/2$.

We compare all algorithms in Table~\ref{tab:time_complexity_comparison}, along with iMLE \citep{Lvovsky2004a}, diluted iMLE \citep{Goncalves2014a}, and EMD with Armijo line search \citep{Li2018a}.
Their parameters are set according to the cited works.
Although iMLE does not always converge \citep{Rehacek2007a}, we include it because it is often considered as a benchmark.
We do not include the accelerated projected gradient descent \citep{Shang2017a} as it is slower than iMLE in experiments \citep{Ahmed2021a}.

\begin{figure}[t!]
\caption{Performances of all algorithms in Table~\ref{tab:time_complexity_comparison}, iMLE, diluted iMLE, and EMD with line search for computing the ML estimate for quantum state tomography.}
\label{fig:mlqst}
\centering

\ifdefined\arxiv
	\begin{subfigure}{0.48\columnwidth}
	\centering
	\caption{Fidelity between the iterates and the $W$ state versus the elapsed time.}
	\includegraphics[width=\columnwidth]{./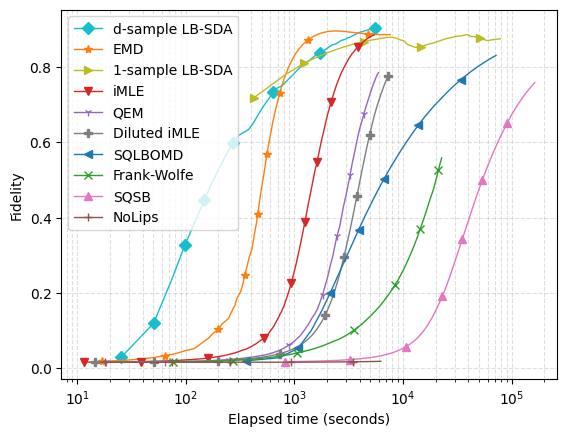}
\end{subfigure}%
	\hfill
\begin{subfigure}{0.48\columnwidth}
	\centering
	\caption{Approximate optimization error versus the elapsed time.}
	\includegraphics[width=\columnwidth]{./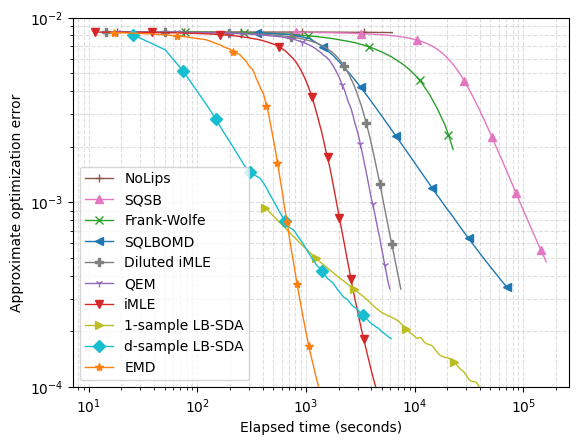}
	\end{subfigure}%
\else
	\begin{subfigure}{\columnwidth}
	\centering
	\caption{Fidelity between the iterates and the $W$ state versus the elapsed time.}
	\includegraphics[width=\columnwidth]{./figures/mlqst/6qubits/time-fidelity.png}
	\end{subfigure}%
	\vfill
	\begin{subfigure}{\columnwidth}
	\centering
	\caption{Approximate optimization error versus the elapsed time.}
	\includegraphics[width=\columnwidth]{./figures/mlqst/6qubits/time-error.png}
	\end{subfigure}%
\fi

\end{figure}

Figure~\ref{fig:mlqst} presents the numerical results.
The fidelity between two quantum states $\rho,\rho'\in\D_d$ is defined as $\smash{F(\rho,\rho')\coloneqq\left( \tr \sqrt{ \sqrt{\rho}\rho'\sqrt{\rho} } \right)^2}\in[0,1]$.
It is a standard measure of the closeness of two quantum states, with $F(\rho,\rho')=1$ if and only if $\smash{\rho=\rho'}$.
Similar to the Poisson inverse problem, as the goal of quantum state tomography is to recover the unknown quantum state, results presented in terms of the fidelity is more important than results presented in terms of the optimization error.

Observe that $d$-sample LB-SDA outperforms all methods in terms of the fidelity.
We conclude that $d$-sample LB-SDA achieves the currently best theoretical time complexity and the currently best empirical performance for computing the ML estimate for quantum state tomography.

Note that $d$-sample LB-SDA performs better than SQLBOMD and SQSB in terms of the optimization error.
It also outperforms QEM, EMD with line search, diluted iMLE, and iMLE when the optimization error is not smaller than $10^{-3}$.
Recall that the latter four algorithms possess theoretical drawbacks. The time complexity of QEM has a worse sample size dependence and a better optimization error dependence than that of $d$-sample LB-SDA; EMD with line search and diluted iMLE lack non-asymptotic complexity guarantees; and iMLE does not converge in general.

While it is theoretically known that stochastic methods outperform batch ones when the dimension and the sample size are sufficiently large \citep{Bottou2007a}, empirical results presented in the literature did not confirm this phenomenon.
In this work, we observed that $d$-sample LB-SDA outperforms all methods in terms of the fidelity.
This marks the first empirical evidence that stochastic methods can be more efficient than batch methods for computing the ML estimate for quantum state tomography.

\ifdefined\arxiv
\section{Concluding Remarks}
\else
\section{CONCLUDING REMARKS}
\fi
We have proposed a stochastic first-order method named $B$-sample LB-SDA for solving the Poisson inverse problem, computing the ML estimate for quantum state tomography, and approximating PSD matrix permanents.
In particular, $d$-sample LB-SDA takes $\tildeO(d^3/\varepsilon^2)$ time to obtain an $\varepsilon$-optimal solution in the quantum setup, improving the time complexities of existing first-order methods.
The improvement is based on a new analysis for mini-batch methods, which relies on a novel self-bounding-type property of the logarithmic loss and a new local-norm based analysis of the online-to-batch conversion.
Lastly, we have shown that LB-SDA performs better empirically than all methods with explicit complexity guarantees.

Several research directions arise.
One direction is to design accelerated or variance-reduced methods for solving the optimization problem \eqref{eq:quantum_problem} based on the smoothness characterization.
Another direction is to generalize our argument to other non-smooth loss functions.

\acks{C.-E.~Tsai and Y.-H.~Li are supported by the Young Scholar Fellowship (Einstein Program) of the National Science and Technology Council of Taiwan under grant number NSTC 112-2636-E-002-003, by the 2030 Cross-Generation Young Scholars Program (Excellent Young Scholars) of the National Science and Technology Council of Taiwan under grant number NSTC 112-2628-E-002-019-MY3, by the research project ``Pioneering Research in Forefront Quantum Computing, Learning and Engineering'' of National Taiwan University under grant number NTU-CC-112L893406, and by the Academic Research-Career Development Project (Laurel Research Project) of National Taiwan University under grant number NTU- CDP-112L7786.

H.-C.~Cheng is supported by Grants No.~NSTC 112-2636-E-002-009, No.~NSTC 112-2119-M-007-006, No.~NSTC 112-2119-M-001-006, No.~NSTC 112-2124-M-002-003, No.~NTU-112V1904-4, No.~NTU-112L900702, and by the research project ``Pioneering Research in Forefront Quantum Computing, Learning and Engineering'' of National Taiwan University under Grant No.~NTC-CC-112L893405.}

\appendix

\ifdefined\arxiv
\section{Preliminaries}
\else
\section{PRELIMINARIES} 
\fi
\label{app:preliminaries}
Throughout this section, let $(\V,\braket{\cdot,\cdot})$ be a finite-dimensional real Hilbert space, such as $\R^d$ with the standard inner product and $\H^d$ with the Hilbert-Schmidt inner product $\braket{U,V}\coloneqq\tr(U^\ast V)$.
Let $\mathcal{X}\subseteq\V$ be a convex set.

\subsection{Self-Concordance and Relative Smoothness} \label{app:self_concordance_relative_smoothness}
This section provides necessary background information on the notions of \textit{self-concordance} \citep{Nesterov1994a,Nesterov2018a} and \textit{relative smoothness} \citep{Bauschke2017a,Lu2018a}, which form the basis of the smoothness characterization in Section~\ref{sec:properties}.
We begin with self-concordance.

\begin{definition}[Self-concordance]
A closed convex function $\varphi:\V\to(-\infty,\infty]$ with an open domain $\dom \varphi$ is said to be $M$-self-concordant if it is three-times continuously differentiable on $\dom\varphi$ and
\[
	\abs{ D^3 \varphi(x)[u,u,u] } \leq 2M( D^2\varphi(x)[u,u] )^{3/2},\quad\forall x\in\dom\varphi,\ u\in\V.
\]
\end{definition}

\begin{theorem}[Theorem~5.1.5 of \citet{Nesterov2018a}] \label{thm:dikin_ellipsoid}
Let $\varphi$ be an $M$-self-concordant function.
Let $\norm{\cdot}_x\coloneqq(D^2\varphi(x)[\cdot,\cdot])^{1/2}$ be the local norm associated with $\varphi$ at $x$.
Define the Dikin ellipsoid $W(x)\coloneqq\{ y\in\V \mid \norm{y-x}_x < 1/M \}$.
Then, $W(x)\subseteq\dom \varphi$ for all $x\in\dom\varphi$.
\end{theorem}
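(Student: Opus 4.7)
The plan is to reduce the multidimensional claim to a one-dimensional ODE-style argument along the line through $x$. Fix $x\in\dom\varphi$ and $u\in\V$ with $\norm{u}_x<1/M$, and consider the univariate restriction $\phi(t)\coloneqq\varphi(x+tu)$. Since $\dom\varphi$ is open and contains $x$, $\phi$ is defined on a maximal open interval $(-a,b)$ containing $0$. The chain rule gives $\phi''(t)=D^2\varphi(x+tu)[u,u]$ and $\phi'''(t)=D^3\varphi(x+tu)[u,u,u]$, so the self-concordance hypothesis applied at $x+tu$ in direction $u$ collapses to the scalar inequality $\abs{\phi'''(t)}\leq 2M(\phi''(t))^{3/2}$. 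The goal becomes showing $b>1$, which is exactly $x+u\in\dom\varphi$.

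The next step is to convert this differential inequality into a linear lower bound. Assuming $\phi''(t)>0$ on $(-a,b)$ (the case $\phi''\equiv 0$ makes $\varphi$ affine along the line, so the statement is trivial), set $f(t)\coloneqq(\phi''(t))^{-1/2}$. A short computation yields $f'(t)=-\tfrac{1}{2}(\phi''(t))^{-3/2}\phi'''(t)$, and the self-concordance bound gives $\abs{f'(t)}\leq M$. Integrating on $[0,t]$ produces $f(t)\geq f(0)-Mt=1/\norm{u}_x-Mt$, so $f$ stays strictly positive on $[0,\min(b,1/(M\norm{u}_x)))$ and $\phi''(t)\leq (f(0)-Mt)^{-2}$ on that interval.

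Finally, I would argue $b\geq 1/(M\norm{u}_x)>1$ by contradiction. If $b\leq 1/(M\norm{u}_x)$, then on $[0,b)$ the bound $\phi''(t)\leq(f(0)-Mt)^{-2}$ yields a finite $\phi'$ and $\phi$ through two integrations, so $\phi$ has a finite limit at $t=b$. Since $\varphi$ is closed, this liminf value upper-bounds $\varphi(x+bu)$, placing $x+bu\in\dom\varphi$; openness of $\dom\varphi$ then supplies a neighborhood inside the domain, contradicting the maximality of $b$. Hence $b>1$ and $x+u\in\dom\varphi$.

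The main technical obstacle is the boundary-extension step in the borderline case $b=1/(M\norm{u}_x)$, where $f(b^{-})=0$ and $\phi''$ blows up: one must still guarantee that $\phi$, being one integral tamer than $\phi''$, extends continuously. The estimate $\phi(t)-\phi(0)-t\phi'(0)=\int_0^t(t-s)\phi''(s)\,ds\leq \int_0^t(t-s)(f(0)-Ms)^{-2}\,ds$ shows the right-hand side stays finite as $t\uparrow b$, which closes the argument. Everything else is calculus and standard convex analysis.
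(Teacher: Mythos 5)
The paper does not prove this statement; it cites it verbatim as Theorem~5.1.5 of \citet{Nesterov2018a}, so there is no in-paper proof to compare against. Your reconstruction is the standard textbook argument: restrict to a line, turn the self-concordance inequality into $\abs{f'(t)} \le M$ for $f = (\phi'')^{-1/2}$, integrate, and use closedness of $\varphi$ together with openness of $\dom\varphi$ to extend the maximal interval.

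There is, however, one concrete error, in the very part you flag as the ``main technical obstacle.'' In the borderline case $b = 1/(M\norm{u}_x)$, i.e.\ $f(0) = Mb$, you claim that
\[
\int_0^t (t-s)\,(f(0)-Ms)^{-2}\,\du s
\]
stays finite as $t \uparrow b$. It does not: substituting $r = b - s$ gives $f(0) - Ms = Mr$, so the integrand is $M^{-2} r^{-1}$ and $\int_0^b r^{-1}\,\du r = \infty$. So $\phi$ is \emph{not} one integral tamer than $\phi''$ in the way you assert --- $\phi$ genuinely can blow up at the boundary of the Dikin ellipsoid (consider $\varphi(x) = -\log x$ with $M=1$, $x=1$, $u=1$: $b = 1$ and $\phi(t) = -\log(1+t(-1))\to\infty$, wait, take $u = -1$: $\phi(t)=-\log(1-t)\to\infty$ as $t\uparrow 1$). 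The good news is that this case need not be handled at all. To show $b \ge 1/(M\norm{u}_x)$ it suffices to assume $b < 1/(M\norm{u}_x)$ \emph{strictly}; then $\phi''(t) \le (f(0)-Mt)^{-2} \le (f(0)-Mb)^{-2} < \infty$ is uniformly bounded on $[0,b)$, the two integrations are finite, and your closedness-plus-openness contradiction goes through cleanly. Since $\norm{u}_x < 1/M$ gives $1/(M\norm{u}_x) > 1$, the conclusion $b \ge 1/(M\norm{u}_x) > 1$ already puts $x+u \in \dom\varphi$. You should also state explicitly the dichotomy you invoke --- for a self-concordant function either $\phi'' \equiv 0$ or $\phi'' > 0$ along the whole line segment --- which is standard but deserves a sentence (it follows from the same differential inequality: if $\phi''(t_0)=0$ the inequality forces $\phi'''$ to vanish too, and one shows $\phi''$ cannot leave zero).
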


Theorem~\ref{thm:self_concordant_smoothness} presents an important local smoothness-type property of self-concordant functions.
Define $\omega(t) \coloneqq t - \log(1+t)$ and its Fenchel conjugate $\omega_\ast(t) = -t-\log(1-t)$.
\begin{theorem}[Theorem~5.1.9 and Lemma~5.1.5 of \citet{Nesterov2018a}] \label{thm:self_concordant_smoothness}
Let $\varphi$ be an $M$-self-concordant function.
Let $\norm{\cdot}_x\coloneqq(D^2\varphi(x)[\cdot,\cdot])^{1/2}$ be the local norm associated with $\varphi$ at $x$.
Then, for $x,y\in\dom\varphi$ such that $\norm{y-x}_x < 1/M$, it holds that
\[
	\varphi(y) \leq \varphi(x) + \braket{ \nabla \varphi (x), y - x} + \frac{1}{M^2}\omega_\ast( M\norm{ y-x }_x ).
\]
Moreover, if $\norm{y-x}_x < 1/(2M)$, then
\[
	\varphi(y) \leq \varphi(x) + \braket{ \nabla \varphi (x), y - x} + \norm{ y-x }_x^2.
\]
\end{theorem}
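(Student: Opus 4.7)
The plan is to reduce to a one-dimensional problem by restricting $\varphi$ to the segment from $x$ to $y$, where self-concordance becomes a scalar third-derivative inequality that can be integrated to control the Hessian along the line. Set $u \coloneqq y - x$, $r \coloneqq \norm{u}_x$, and $\phi(t) \coloneqq \varphi(x + tu)$ for $t \in [0,1]$. By Theorem~\ref{thm:dikin_ellipsoid}, $y \in \dom \varphi$ whenever $r < 1/M$, and since $\dom \varphi$ is open and convex the entire segment lies in $\dom \varphi$, so $\phi$ is $C^3$ on $[0,1]$. Direct differentiation gives $\phi''(t) = D^2\varphi(x+tu)[u,u]$ and $\phi'''(t) = D^3\varphi(x+tu)[u,u,u]$, so the $M$-self-concordance of $\varphi$ specializes to $\abs{\phi'''(t)} \leq 2M\,\phi''(t)^{3/2}$.

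The key idea is to study $\psi(t) \coloneqq \phi''(t)^{-1/2}$, for which a direct computation yields $\psi'(t) = -\phi'''(t)/(2\phi''(t)^{3/2})$; the self-concordance inequality therefore collapses to the Lipschitz bound $\abs{\psi'(t)} \leq M$. Integrating from $0$ and using $\psi(0) = 1/r$ gives $\psi(t) \geq 1/r - Mt$; since $Mr < 1$ the right-hand side stays positive throughout $[0,1]$, and squaring yields $\phi''(t) \leq r^2/(1 - Mrt)^2$. Now invoke Taylor's theorem with integral remainder,
\begin{equation*}
\varphi(y) - \varphi(x) - \braket{\nabla \varphi(x), u} \;=\; \int_0^1 (1 - t)\,\phi''(t)\,dt,
\end{equation*}
substitute the Hessian bound, and change variables $s = Mrt$. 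Using the identity $Mr - s = (1-s) - (1-Mr)$ to split the integrand into $1/(1-s)$ and $(1-Mr)/(1-s)^2$, the integral reduces to $M^{-2}(-\log(1-Mr) - Mr) = \omega_\ast(Mr)/M^2$, establishing the first inequality.

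For the quadratic bound, expand $\omega_\ast(s) = \sum_{k \geq 2} s^k/k$; when $s \leq 1/2$, the tail estimate $\omega_\ast(s) \leq \tfrac{1}{2}\sum_{k \geq 2} s^k = s^2/(2(1-s)) \leq s^2$ applies at $s = Mr$, giving $\omega_\ast(Mr)/M^2 \leq r^2 = \norm{y-x}_x^2$. The main obstacle is identifying the correct scalar transform in the differential inequality: neither $\phi''$ itself nor $\log \phi''$ linearizes the self-concordance exponent $3/2$, and only the choice $\psi = (\phi'')^{-1/2}$ converts the cubic third-derivative bound into a Lipschitz bound on $\psi'$ that integrates cleanly. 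Once that reduction is in hand, the remainder is a single substitution plus partial fractions, and it is precisely this calculation that identifies the natural upper bound with the Fenchel conjugate $\omega_\ast$ used throughout the paper.
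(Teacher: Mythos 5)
Your proof is correct, and since the paper merely cites this result from Nesterov's book without reproducing a proof, there is no in-paper argument to compare against; your derivation is the standard textbook one. The reduction to the one-dimensional function $\phi(t)=\varphi(x+tu)$, the reparametrization $\psi=(\phi'')^{-1/2}$ converting the cubic self-concordance inequality into the Lipschitz bound $\abs{\psi'}\leq M$, the integration yielding $\phi''(t)\leq r^2/(1-Mrt)^2$, and the Taylor-with-integral-remainder step producing $\omega_\ast(Mr)/M^2$ are exactly Nesterov's Theorem~5.1.9; and the series estimate $\omega_\ast(s)\leq s^2/(2(1-s))\leq s^2$ for $s\leq 1/2$ is precisely the content of Nesterov's Lemma~5.1.5, so the second (quadratic) bound follows as you say. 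The only point worth making explicit, which you implicitly assume, is that $\phi''(t)>0$ along the whole segment: this follows from $\psi(0)=1/r>0$ together with the same Lipschitz bound (giving $\psi(t)\leq 1/r+Mt$, hence $\phi''(t)\geq (1/r+Mt)^{-2}>0$), or simply from the positive-definiteness assumption that Nesterov imposes on self-concordant functions.
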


\begin{lemma}[Proposition~5.4.5 of \citet{Nesterov1994a}] \label{lem:self_concordance}
The logarithmic barrier $h(\rho) = -\log\det\rho$ is $1$-self-concordant.
\end{lemma}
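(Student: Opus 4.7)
The plan is to verify the defining inequality $\abs{D^3 h(\rho)[U,U,U]} \leq 2(D^2 h(\rho)[U,U])^{3/2}$ for every $\rho \in \H^d_{++}$ and $U \in \H^d$ by reducing it to a one-dimensional statement along the line $t \mapsto \rho + tU$. Setting $\varphi(t) \coloneqq -\log\det(\rho + tU)$, the self-concordance inequality at $\rho$ in direction $U$ is equivalent to $\abs{\varphi'''(0)} \leq 2 (\varphi''(0))^{3/2}$, a much cleaner scalar inequality. Since $\rho$ is positive definite, $\rho + tU$ remains positive definite for small $\abs{t}$, so $\varphi$ is smooth near $0$ and the derivatives exist.

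First I would differentiate $\varphi$ using the matrix identities $\tfrac{d}{dt}\log\det A(t) = \tr(A(t)^{-1}\dot A(t))$ and $\tfrac{d}{dt} A(t)^{-1} = -A(t)^{-1}\dot A(t)A(t)^{-1}$. Iterating with $A(t) = \rho + tU$ and $\dot A(t) = U$, and evaluating at $t=0$, gives
\[
\varphi''(0) = \tr((\rho^{-1} U)^2), \qquad \varphi'''(0) = -2 \tr((\rho^{-1} U)^3).
\]
Next I would perform the similarity change of variable $V \coloneqq \rho^{-1/2} U \rho^{-1/2} \in \H^d$. By cyclicity of the trace, $\tr((\rho^{-1}U)^k) = \tr(V^k)$ for every integer $k \geq 1$, so in terms of the real eigenvalues $\lambda_1,\ldots,\lambda_d$ of the Hermitian matrix $V$, the target inequality becomes
\[
\Bigl\lvert -2\sum\nolimits_i \lambda_i^3 \Bigr\rvert \;\leq\; 2 \Bigl(\sum\nolimits_i \lambda_i^2\Bigr)^{3/2},
\]
i.e.\ $\norm{\lambda}_3^3 \leq \norm{\lambda}_2^3$, which is the $p=2,3$ case of the standard $\ell_p$-norm monotonicity on $\R^d$ (equivalently, the power-mean inequality applied to $\abs{\lambda_i}$). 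This delivers the constant $M=1$ with no slack.

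The argument has no genuine obstacle; the only subtlety is executing the iterated matrix differentiation cleanly and verifying that the similarity $V = \rho^{-1/2}U\rho^{-1/2}$ is legitimate and converts $\rho^{-1}U$, which is generally not Hermitian, into a Hermitian matrix whose \emph{real} eigenvalues control both $\varphi''(0)$ and $\varphi'''(0)$ simultaneously. Once that reduction is in place the problem collapses to an elementary scalar inequality. Notably, nothing in the argument uses positivity of $U$, so the bound holds for every Hermitian perturbation direction, as required by the definition of self-concordance.
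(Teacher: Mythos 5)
Your derivation is correct, and it is the standard proof of self-concordance of $-\log\det$: restrict to a line, compute $\varphi''(0)=\tr((\rho^{-1}U)^2)$ and $\varphi'''(0)=-2\tr((\rho^{-1}U)^3)$, pass to the Hermitian matrix $V=\rho^{-1/2}U\rho^{-1/2}$ via cyclicity of trace, and reduce to $\lvert\sum_i\lambda_i^3\rvert\le(\sum_i\lambda_i^2)^{3/2}$, which is $\ell_3\le\ell_2$ norm monotonicity. The paper itself does not prove this lemma — it cites Proposition~5.4.5 of Nesterov and Nemirovskii (1994) — and your argument is in essence exactly the argument found there, so there is nothing to reconcile.
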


Now, we introduce the notion of relative smoothness.
\begin{definition}[Relative smoothness] \label{def:relative_smoothness}
Let $f, h:\V\to(-\infty,\infty]$.
The function $f$ is said to be $L$-smooth relative to $h$ on $\mathcal{X}$ for some $L> 0$ if $Lh-f$ is convex on $\mathcal{X}$.
\end{definition}

\begin{lemma}[Proposition~7 of \citet{Tsai2023a}] \label{lem:relative_smoothness}
Let $f(\rho)\coloneqq\E[-\log\tr(A\rho)]$ and $h(\rho)\coloneqq-\log\det\rho$ be the logarithmic barrier.
Then, the function $f$ is $1$-smooth relative to $h$ on $\H_{++}^d$.
\end{lemma}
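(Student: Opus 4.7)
The plan is to verify Definition~\ref{def:relative_smoothness} directly: I need to show that $h - f$ is convex on $\H_{++}^d$. Since $f(\rho)=\E_{A\sim P}[\ell_A(\rho)]$ with $\ell_A(\rho)\coloneqq -\log\tr(A\rho)$, and expectations of convex functions are convex, it suffices to establish that $h-\ell_A$ is convex on $\H_{++}^d$ for every fixed $A\in\H_+^d$ in the support of $P$. For $A=0$ we may exclude the trivial case since then $\ell_A$ is not defined; for $A\neq 0$, $\tr(A\rho)>0$ on $\H_{++}^d$ and $\ell_A$ is smooth.

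Both $h$ and $\ell_A$ are twice continuously differentiable on $\H_{++}^d$, so convexity of $h-\ell_A$ reduces to the pointwise operator inequality $D^2 h(\rho)[H,H]\ge D^2\ell_A(\rho)[H,H]$ for all $H\in\H^d$. A direct computation (using \eqref{eq:local_norm} for the Hessian of $h$, and the chain rule for the Hessian of $-\log\tr(A\cdot)$) yields
\[
	D^2 h(\rho)[H,H]=\tr\!\bigl((\rho^{-1}H)^2\bigr)=\norm{H}_\rho^2,\qquad
	D^2\ell_A(\rho)[H,H]=\frac{(\tr(AH))^2}{(\tr(A\rho))^2}.
\]
Hence the claim reduces to the scalar inequality
\[
	\abs{\tr(AH)}\;\le\;\tr(A\rho)\,\norm{H}_\rho,\qquad A\in\H_+^d,\ \rho\in\H_{++}^d,\ H\in\H^d.
\]

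To prove this inequality, I would rewrite $\tr(AH)$ symmetrically with respect to $\rho$ as a Hilbert--Schmidt inner product:
\[
	\tr(AH)=\tr\!\bigl((\rho^{1/2}A\rho^{1/2})(\rho^{-1/2}H\rho^{-1/2})\bigr).
\]
Cauchy--Schwarz then bounds the absolute value by $\sqrt{\tr((\rho^{1/2}A\rho^{1/2})^2)}\cdot\sqrt{\tr((\rho^{-1/2}H\rho^{-1/2})^2)}$, where the second factor equals $\norm{H}_\rho$. For the first factor, set $B\coloneqq\rho^{1/2}A\rho^{1/2}\in\H_+^d$; since $B$ is PSD with eigenvalues $\lambda_i\ge 0$, one has $\tr(B^2)=\sum_i\lambda_i^2\le(\sum_i\lambda_i)^2=(\tr B)^2$, so $\sqrt{\tr(B^2)}\le\tr B=\tr(A\rho)$. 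Combining the two bounds yields the desired inequality, completing the proof.

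The main (and essentially only) obstacle is the two-step bound on $\abs{\tr(AH)}$: the initial Cauchy--Schwarz step hands us a Frobenius-type norm of $\rho^{1/2}A\rho^{1/2}$, and one must then absorb this into the trace $\tr(A\rho)$ using that this matrix is PSD. Everything else is bookkeeping: linearity of expectation, second-order characterization of convexity, and standard matrix-calculus formulas for the Hessians of $-\log\det$ and $-\log\tr(A\cdot)$.
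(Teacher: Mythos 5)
Your proof is correct. The paper does not prove this lemma itself — it defers to Proposition~7 of \citet{Tsai2023a} — so there is no in-paper proof to compare against. Your argument is a clean, self-contained verification: reduce to the per-sample Hessian domination $D^2 h(\rho)[H,H]\ge D^2\ell_A(\rho)[H,H]$, rewrite $\tr(AH)$ symmetrically, apply Cauchy--Schwarz, and absorb $\sqrt{\tr(B^2)}$ into $\tr B$ for PSD $B$. That last step ($\tr(B^2)\le(\tr B)^2$ for $B\in\H_+^d$) is exactly the device the paper uses to prove Lemma~\ref{lem:lipschitz} in Appendix~\ref{app:lipschitz}, so your route is very much in the paper's spirit. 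One minor hygiene point worth stating explicitly: you invoke "expectations of convex functions are convex" to pass from per-sample convexity of $h-\ell_A$ to convexity of $h-f=\E[h-\ell_A]$, which sidesteps any issue of interchanging $D^2$ and $\E$; this is a nice touch, but you should make clear you are applying it to the family $\{h-\ell_A\}_A$, not to $f$ alone.
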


\subsection{FTRL with Self-Concordant Regularizer} \label{app:olo}

This section presents the regret bound of follow-the-regularized-leader (FTRL) with self-concordant regularizers of \citet{Tsai2023b} in a slightly general form.
An \textit{online linear optimization} problem is a multi-round game between two players, say \textsc{Learner} and \textsc{Reality}.
In the $t$-th round,
\begin{itemize}
\item first, \textsc{Learner} announces an action $x_t\in\mathcal{X}$;
\item then, \textsc{Reality} reveals a loss function $f_t(x)\coloneqq\braket{v_t,x}$ for some $v_t\in\V$;
\item lastly, \textsc{Learner} suffers a loss $f_t(x_t)$.
\end{itemize}
The goal of \textsc{Learner} is to minimize the regret $\sup_{x\in\mathcal{X}}R_t(x)$, where
\[
	R_t(x) \coloneqq \sum_{\tau=1}^t f_\tau(x_\tau) -  \sum_{\tau=1}^t f_\tau(x), \quad\forall x\in\mathcal{X}.
\]
We refer readers to the lecture notes of \citet{Orabona2023a} and \citet{Hazan2016a} for a general introduction to online convex optimization.


\begin{algorithm}[h]
\caption{FTRL for online linear optimization} 
\label{alg:ftrl}
\begin{algorithmic}[1]
\STATE $x_1 \in \argmin_{x\in\mathcal{X}} \eta_0^{-1}\varphi(x)$.
\FORALL{$t \in \mathbb{N}$}
\STATE Announce $x_{t}$ and receive $v_t \in \V$.
\STATE Compute a learning rate $\eta_t>0$.
\STATE $x_{t + 1} \leftarrow \argmin_{x\in\mathcal{X}} \braket{v_{1:t}, x } + \eta_t^{-1}\varphi(x)$.
\ENDFOR
\end{algorithmic}
\end{algorithm}

FTRL is presented in Algorithm~\ref{alg:ftrl}.
We assume that the regularizer $\varphi$ is a self-concordant function.

\begin{assumption} \label{ass:regularizer}
The function $\varphi$ is an $M$-self-concordant function such that $\mathcal{X}$ is contained in the closure of $\dom \varphi$ and $\min_{x\in\mathcal{X}}\varphi(x) = 0$.
The Hessian $\nabla^2\varphi(x)$ is positive definite for all $x \in \mathcal{X} \cap \dom \varphi$.
\end{assumption}

Let $\norm{\cdot}_x\coloneqq(D^2\varphi(x)[\cdot,\cdot])^{1/2}$ be the local norm associated with $\varphi$ at $x$ and $\norm{\cdot}_{x,\ast}$ be its dual norm.
The theorem below bounds the regret of Algorithm~\ref{alg:ftrl}.
\begin{theorem}[Theorem~3.2 of \citet{Tsai2023b}]\label{thm:ftrl}
Assume that Assumption~\ref{ass:regularizer} holds and $\eta_{t-1}\norm{v_t}_{x_t,\ast} \leq 1/(2M)$ for all $t\in\N$.
Then, Algorithm~\ref{alg:ftrl} satisfies
\[
	R_t(x)
	\leq \frac{\varphi(x)}{\eta_t}
	+ \sum_{\tau=1}^t \eta_{\tau-1}\norm{v_\tau}_{x_\tau,\ast}^2, \quad \forall t\in\N.
\]
\end{theorem}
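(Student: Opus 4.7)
The plan is to follow the standard FTRL regret-analysis recipe: decompose the regret into a regularization penalty against the comparator plus per-round ``stability'' terms, then bound each stability term using the local smoothness afforded by self-concordance of $\varphi$.

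First I would introduce the auxiliary iterate $y_{\tau+1} \coloneqq \argmin_{x\in\mathcal{X}} \{\braket{v_{1:\tau}, x} + \eta_{\tau-1}^{-1}\varphi(x)\}$, which is the FTL step taken with the same regularization strength $\eta_{\tau-1}^{-1}$ that produced $x_\tau$. Writing
\[
R_t(x) = \sum_{\tau=1}^t \braket{v_\tau, x_\tau - y_{\tau+1}} + \sum_{\tau=1}^t \braket{v_\tau, y_{\tau+1} - x},
\]
I would bound the second sum by $\varphi(x)/\eta_t$ via a be-the-leader-style telescoping argument that uses the optimality of each $y_{\tau+1}$ and absorbs the $\eta_\tau$-mismatch with the normalization $\min_{\mathcal{X}}\varphi = 0$ from Assumption~\ref{ass:regularizer}.

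Next I would bound each stability term $\braket{v_\tau, x_\tau - y_{\tau+1}}$ by $\eta_{\tau-1}\norm{v_\tau}_{x_\tau,\ast}^2$. Because $\varphi$ acts as a barrier, $x_\tau$ lies in $\dom\varphi$; subtracting the first-order optimality conditions of $x_\tau$ and $y_{\tau+1}$ so that the linear cross-terms cancel gives
\[
\braket{v_\tau, x_\tau - y_{\tau+1}} \geq \eta_{\tau-1}^{-1}\, D_\varphi(y_{\tau+1}, x_\tau),
\]
where $D_\varphi$ denotes the Bregman divergence of $\varphi$. The hypothesis $\eta_{\tau-1}\norm{v_\tau}_{x_\tau,\ast}\leq 1/(2M)$ keeps $y_{\tau+1}$ inside the Dikin ellipsoid of $x_\tau$ (Theorem~\ref{thm:dikin_ellipsoid}), inside which the standard self-concordance estimate yields a quadratic lower bound $D_\varphi(y_{\tau+1}, x_\tau) \geq c\,\norm{y_{\tau+1} - x_\tau}_{x_\tau}^2$ complementing the upper bound of Theorem~\ref{thm:self_concordant_smoothness}. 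A Cauchy--Schwarz step then gives $\norm{y_{\tau+1} - x_\tau}_{x_\tau} = O(\eta_{\tau-1}\norm{v_\tau}_{x_\tau,\ast})$ and hence the quadratic stability bound, with the exact constant $1$ recoverable from the sharper form of Theorem~\ref{thm:self_concordant_smoothness}.

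The main obstacle is the time-varying $\eta_\tau$. The auxiliary iterate $y_{\tau+1}$ is deliberately chosen to use $\eta_{\tau-1}$ rather than $\eta_\tau$ so that the linear cross-terms cancel in the stability step and Newton-type control from self-concordance is available; however, this mismatch forces nontrivial bookkeeping in the telescoping of Step~1 and a careful verification that $y_{\tau+1}$ stays within the Dikin ellipsoid of $x_\tau$ under only the stated bound on $\eta_{\tau-1}\norm{v_\tau}_{x_\tau,\ast}$ (a mild chicken-and-egg that is resolved by the standard homotopy argument along the perturbation $\Psi_\tau \mapsto \Psi_\tau + s\braket{v_\tau,\cdot}$ for $s \in [0,1]$). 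Once both pieces are in hand, summing the per-round bounds and substituting into the decomposition yields the claimed regret bound.
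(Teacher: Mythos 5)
This theorem is not proved in the paper; it is quoted verbatim as Theorem~3.2 of Tsai, Cheng, and Li (2023b), with only a remark that the analysis extends to the quantum setting. So there is no in-paper argument to compare against, and I can only assess your proposal on its own terms. Your architecture is the right one: introduce the auxiliary iterate $y_{\tau+1}$ that performs the FTL step with the \emph{same} regularization strength $\eta_{\tau-1}^{-1}$ used to produce $x_\tau$, telescope the be-the-leader sum against the comparator, and control $x_\tau - y_{\tau+1}$ locally via self-concordance. The telescoping in your first step does go through (in fact yielding the slightly stronger $\varphi(x)/\eta_{t-1}$), but it genuinely uses $\eta_{\tau-1}^{-1}\varphi \leq \eta_\tau^{-1}\varphi$ together with $\varphi\geq 0$, i.e.\ nonincreasing learning rates; you should make that hypothesis explicit, since Theorem~\ref{thm:ftrl} does not state it.

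The genuine gap is in the stability step, and it is quantitative: your route cannot recover the constant~$1$ in the theorem. Lower-bounding $\braket{v_\tau, x_\tau - y_{\tau+1}}$ by $\eta_{\tau-1}^{-1}$ times the (symmetric) Bregman divergence, then lower-bounding that by $c\,\norm{y_{\tau+1}-x_\tau}_{x_\tau}^2$, and closing with Cauchy--Schwarz produces $\braket{v_\tau, x_\tau - y_{\tau+1}}\leq (1/c)\eta_{\tau-1}\norm{v_\tau}_{x_\tau,\ast}^2$ with $1/c>1$. Concretely, self-concordance gives $\braket{\nabla\varphi(y)-\nabla\varphi(x),y-x}\geq\norm{y-x}_x^2/(1+M\norm{y-x}_x)$, so under $M\eta_{\tau-1}\norm{v_\tau}_{x_\tau,\ast}\leq 1/2$ one only gets $\norm{y_{\tau+1}-x_\tau}_{x_\tau}\leq 2\eta_{\tau-1}\norm{v_\tau}_{x_\tau,\ast}$ and hence a stability term of $2\eta_{\tau-1}\norm{v_\tau}_{x_\tau,\ast}^2$; this factor of two is sharp at the boundary of the hypothesis. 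Moreover, your claim that the exact constant is ``recoverable from the sharper form of Theorem~\ref{thm:self_concordant_smoothness}'' is wrong-directed: that theorem gives the \emph{upper} bound $D_\varphi(y,x)\leq\norm{y-x}_x^2$, while your argument needs a \emph{lower} bound of that shape. To get the exact constant you must bound a smaller quantity: with $F_\tau(x)\coloneqq\braket{v_{1:\tau-1},x}+\eta_{\tau-1}^{-1}\varphi(x)$ and $G_\tau\coloneqq F_\tau+\braket{v_\tau,\cdot}$, the FTRL per-round stability term is $G_\tau(x_\tau)-G_\tau(y_{\tau+1})$, not the larger $\braket{v_\tau,x_\tau-y_{\tau+1}}$ (they differ by $F_\tau(x_\tau)-F_\tau(y_{\tau+1})\leq 0$). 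Scaling by $\eta_{\tau-1}$ makes $\eta_{\tau-1}G_\tau$ an $M$-self-concordant function whose Newton decrement at $x_\tau$ is at most $\eta_{\tau-1}\norm{v_\tau}_{x_\tau,\ast}$, and the standard gap bound $\Phi(x)-\inf\Phi\leq M^{-2}\omega_\ast(M\lambda(x))$ combined with $\omega_\ast(s)\leq s^2$ for $s\leq 1/2$ gives precisely $\eta_{\tau-1}\norm{v_\tau}_{x_\tau,\ast}^2$.
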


\begin{remark}
It is important to notice that the regret analysis of \citet{Tsai2023b} directly extends for the quantum setup. 
\end{remark}

The following corollary has appeared in the proof of Theorem~6.2 of \citet{Tsai2023b} implicitly.
We provide the statement and the proof for completeness.

\begin{corollary} \label{cor:ftrl}
Assume that Assumption~\ref{ass:regularizer} holds.
Moreover, assume that $\norm{v_t}_{x_t,\ast}\leq G$ for all $t\in\N$.
Then, for any $D>0$, Algorithm~\ref{alg:ftrl} with
\[
	\eta_t = \frac{D}{ \sqrt{ \sum_{\tau=1}^t \norm{v_\tau}_{x_\tau,\ast}^2 + 4M^2G^2D^2 + G^2}},\quad\forall t\in\N,
\]
satisfies
\[
	R_t(x) \leq \left( \frac{\varphi(x)}{D} + 2D \right)\sqrt{ \sum_{\tau=1}^t \norm{ v_\tau }_{ x_\tau, \ast }^2 + 4M^2G^2D^2 + G^2},\quad\forall t\in\N.
\]
\end{corollary}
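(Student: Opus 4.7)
The plan is to apply Theorem~\ref{thm:ftrl} and then bound the resulting weighted sum of squared gradient norms by a careful telescoping argument. The first step is to verify the step-size condition $\eta_{t-1}\norm{v_t}_{x_t,\ast}\leq 1/(2M)$. Using the hypothesis $\norm{v_t}_{x_t,\ast}\leq G$ and the explicit form of $\eta_{t-1}$, we have $\eta_{t-1}\leq D/\sqrt{4M^2G^2D^2}=1/(2MG)$, which yields the condition. Note that this is exactly why the additive term $4M^2G^2D^2$ is baked into the denominator of $\eta_t$. Theorem~\ref{thm:ftrl} then gives $R_t(x)\leq \varphi(x)/\eta_t+\sum_{\tau=1}^t \eta_{\tau-1}\norm{v_\tau}_{x_\tau,\ast}^2$.

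Denote $S_t\coloneqq \sum_{\tau=1}^t \norm{v_\tau}_{x_\tau,\ast}^2+4M^2G^2D^2+G^2$ and $a_\tau\coloneqq \norm{v_\tau}_{x_\tau,\ast}^2\leq G^2$. By the definition of $\eta_t$, the first term is precisely $\varphi(x)\sqrt{S_t}/D$, which accounts for the $\varphi(x)/D$ contribution in the target bound. The remaining task is to establish
\[
\sum_{\tau=1}^t \eta_{\tau-1} a_\tau = D\sum_{\tau=1}^t a_\tau/\sqrt{S_{\tau-1}} \leq 2D\sqrt{S_t}.
\]

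This is the main technical step, and the subtle point is that $a_\tau/\sqrt{S_{\tau-1}}$ corresponds to an upper-Riemann-sum contribution whose naive telescoping produces an unwanted factor of $\sqrt{2}$ or $\sqrt{2}+1$ instead of the desired $2$. To handle this, I would decompose
\[
a_\tau/\sqrt{S_{\tau-1}} = a_\tau/\sqrt{S_\tau} + a_\tau\bigl(1/\sqrt{S_{\tau-1}}-1/\sqrt{S_\tau}\bigr).
\]
Concavity of $\sqrt{\cdot}$ at $S_\tau$ gives $a_\tau/\sqrt{S_\tau}\leq 2(\sqrt{S_\tau}-\sqrt{S_{\tau-1}})$, so the first part telescopes to $2(\sqrt{S_t}-\sqrt{S_0})$. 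For the second part, using $a_\tau\leq G^2$ together with the telescoping of $1/\sqrt{S_\tau}$ yields $\sum_\tau a_\tau(1/\sqrt{S_{\tau-1}}-1/\sqrt{S_\tau})\leq G^2(1/\sqrt{S_0}-1/\sqrt{S_t})\leq G^2/\sqrt{S_0}$. Since $S_0=4M^2G^2D^2+G^2\geq G^2$, we get $G^2/\sqrt{S_0}\leq \sqrt{S_0}$, hence the two residual terms $-2\sqrt{S_0}+G^2/\sqrt{S_0}$ are nonpositive, and we obtain $\sum_\tau a_\tau/\sqrt{S_{\tau-1}}\leq 2\sqrt{S_t}$.

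Multiplying by $D$ and adding the first-term contribution $\varphi(x)\sqrt{S_t}/D$ yields the stated inequality. The cleanness of the constant $2$ relies on both parts of the fudge factor $4M^2G^2D^2+G^2$ inside the square root defining $\eta_t$: the first summand ensures the step-size condition of Theorem~\ref{thm:ftrl}, while the second absorbs the discrepancy between the left- and right-Riemann sums in the telescoping estimate. The rest of the argument is routine bookkeeping.
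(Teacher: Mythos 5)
Your proof is correct, and the overall strategy is the same as the paper's: verify the step-size condition $\eta_{t-1}\norm{v_t}_{x_t,\ast}\leq 1/(2M)$, apply Theorem~\ref{thm:ftrl}, and then bound the weighted sum $\sum_\tau \eta_{\tau-1}\norm{v_\tau}_{x_\tau,\ast}^2$ by $2D\sqrt{S_t}$. The one place you diverge is in how this last telescoping bound is established. The paper observes that because $\norm{v_\tau}_{x_\tau,\ast}^2 \leq G^2 \leq 4M^2G^2D^2+G^2$, the shifted denominator $S_{\tau-1}$ dominates $\sum_{s=1}^{\tau}\norm{v_s}_{x_s,\ast}^2$; this reduces the sum to the standard form covered by Lemma~4.13 of \citet{Orabona2023a}, which delivers the factor $2$ directly, and the additive constant is reinserted at the end. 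Your route instead splits $a_\tau/\sqrt{S_{\tau-1}}$ into $a_\tau/\sqrt{S_\tau}$ plus $a_\tau\bigl(1/\sqrt{S_{\tau-1}}-1/\sqrt{S_\tau}\bigr)$, telescopes the first part via concavity of $\sqrt{\cdot}$ to $2(\sqrt{S_t}-\sqrt{S_0})$, telescopes the second using $a_\tau\leq G^2$ to $G^2/\sqrt{S_0}$, and then checks that the residual $-2\sqrt{S_0}+G^2/\sqrt{S_0}\leq 0$ because $G^2 \leq S_0$. This is slightly longer but entirely self-contained, and it makes explicit the role of each summand in the fudge factor ($4M^2G^2D^2$ for the step-size condition, $G^2$ for absorbing the left-versus-right Riemann-sum discrepancy), which is a nice observation the paper leaves implicit.
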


\begin{proof}
First, the learning rates satisfy $\eta_{t-1}\norm{v_t}_{x_t,\ast}\leq 1/(2M)$ for all $t\in\N$ because
\[
	\eta_{t-1}\norm{v_t}_{x_t,\ast} = \frac{D \norm{v_t}_{x_t,\ast}}{ \sqrt{ \sum_{\tau=1}^{t-1} \norm{v_\tau}_{x_\tau,\ast}^2 + 4M^2G^2D^2 + G^2 }} \leq \frac{DG}{\sqrt{4M^2G^2D^2}} = \frac{1}{2M}.
\]
By Theorem~\ref{thm:ftrl} and Lemma~4.13 of \citet{Orabona2023a}, we have
\[
\begin{split}
	R_t(x)
	&\leq \frac{\varphi(x)}{D} \sqrt{ \sum_{\tau=1}^t \norm{ v_\tau }_{ x_\tau, \ast }^2 + 4M^2G^2D^2 + G^2 }
	+ D\sum_{\tau=1}^t \frac{ \norm{v_\tau}_{x_\tau,\ast}^2 }{ \sqrt{\sum_{s=1}^{\tau-1} \norm{v_s}_{x_s,\ast}^2 + 4M^2G^2D^2 + G^2} } \\
	&\leq \frac{\varphi(x)}{D} \sqrt{ \sum_{\tau=1}^t \norm{ v_\tau }_{ x_\tau, \ast }^2 + 4M^2G^2D^2 + G^2 }
	+ D\sum_{\tau=1}^t \frac{ \norm{v_\tau}_{x_\tau,\ast}^2 }{ \sqrt{\sum_{s=1}^\tau \norm{v_s}_{x_s,\ast}^2} } \\
	&\leq \frac{\varphi(x)}{D} \sqrt{ \sum_{\tau=1}^t \norm{ v_\tau }_{ x_\tau, \ast }^2 + 4M^2G^2D^2 + G^2 }
	+ 2D \sqrt{\sum_{\tau=1}^t \norm{ v_\tau }_{ x_\tau, \ast }^2} \\
	&\leq \left( \frac{\varphi(x)}{D} + 2D \right)\sqrt{ \sum_{\tau=1}^t \norm{ v_\tau }_{ x_\tau, \ast }^2 + 4M^2G^2D^2 + G^2}.
\end{split}
\]
This completes the proof.
\end{proof}

\subsection{Online-to-Batch Conversion}
This section recaps the online-to-batch conversion proposed by \citet{Cesa-Bianchi2004a}.
Consider the following optimizaiton problem:
\[
	\min_{x\in\mathcal{X}} f(x),
\]
with a stochastic first-order oracle $\O$ that returns an unbiased estimate $\O(x)\in\V$ of $\nabla f (x)$ given any $x \in \mathcal{X}$.
Algorithm~\ref{alg:online_to_batch} presents the online-to-batch conversion and Theorem~\ref{thm:online_to_batch} presents its theoretical guarantee.

\begin{algorithm}[h]
\caption{Online-to-batch conversion} 
\label{alg:online_to_batch}
\hspace*{\algorithmicindent} \textbf{Input: } An online learning algorithm $\mathcal{A}$.
\begin{algorithmic}[1]
\STATE Get $x_1$ from $\mathcal{A}$.
\FORALL{$t \in \mathbb{N}$}
\STATE Output $\bar{x}_t\coloneqq (1/t)x_{1:t}$.
\STATE $g_t = \O(x_t)$.
\STATE Send $f_t(x)\coloneqq\braket{g_t,x}$ to $\mathcal{A}$.
\STATE Get $x_{t+1}$ from $\mathcal{A}$.
\ENDFOR
\end{algorithmic}
\end{algorithm}

\begin{theorem} \label{thm:online_to_batch}
Let $R_t(x)\coloneqq \sum_{\tau=1}^t \braket{g_\tau, x_\tau - x}$ be the regret of the online algorithm $\mathcal{A}$ against $x\in\mathcal{X}$.
Assume that the stochastic gradients are unbiased, i.e., $\E[g_t|g_1,\ldots,g_{t-1},x_1,\ldots,x_{t}] = \nabla f(x_t)$.
Then, for any $x\in\mathcal{X}$, Algorithm~\ref{alg:online_to_batch} satisfies
\[
	\E\left[ f(\bar{x}_t) - f(x) \right] \leq \frac{\E[ R_t(x) ]}{t}, \quad\forall t\in\N,
\]
and
\[
	\E\left[ \sum_{\tau=1}^t (f(\bar{x}_\tau) - f(x)) \right] \leq (1 + \log t) \max_{1\leq\tau\leq t}\E[ R_\tau(x) ], \quad\forall t\in\N,
\]
where the expectation is taken with respect to the stochastic gradients $\{ g_t \}$.
\end{theorem}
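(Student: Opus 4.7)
The plan is to chain together two applications of convexity with the tower property of conditional expectation, so that the stochastic gradients $g_\tau$ effectively get replaced by the true gradients $\nabla f(x_\tau)$ at no cost in expectation. No smoothness, Lipschitzness, or boundedness will be needed; the only analytic input is Jensen's inequality for $f$ and the first-order characterization of convex functions. The first claim should follow almost immediately, and the second claim is a one-line consequence of the first together with the harmonic-sum bound $\sum_{\tau=1}^t 1/\tau \leq 1 + \log t$.

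For the first bound, I would start by applying Jensen's inequality to the average $\bar{x}_t = (1/t)\sum_{\tau=1}^t x_\tau$, which gives
\[
f(\bar{x}_t) - f(x) \;\leq\; \frac{1}{t}\sum_{\tau=1}^t \bigl( f(x_\tau) - f(x) \bigr),
\]
and then use convexity in the form $f(x_\tau) - f(x) \leq \braket{\nabla f(x_\tau), x_\tau - x}$. Taking expectations, the key step is to observe that $x_\tau$ is $\mathcal{H}_\tau$-measurable (since $x_\tau$ is determined by $g_1,\ldots,g_{\tau-1}$ via the online algorithm $\mathcal{A}$), while $x$ is deterministic, so
\[
\E\bigl[\braket{\nabla f(x_\tau), x_\tau - x}\bigr] \;=\; \E\bigl[\braket{\E[g_\tau \mid \mathcal{H}_\tau], x_\tau - x}\bigr] \;=\; \E\bigl[\braket{g_\tau, x_\tau - x}\bigr]
\]
by the unbiasedness hypothesis and the tower property. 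Summing over $\tau$ and dividing by $t$ yields $\E[f(\bar{x}_t) - f(x)] \leq \E[R_t(x)]/t$.

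For the second bound, I would simply sum the first bound across $\tau = 1,\ldots, t$:
\[
\E\!\left[\sum_{\tau=1}^t \bigl( f(\bar{x}_\tau) - f(x) \bigr)\right] \;\leq\; \sum_{\tau=1}^t \frac{\E[R_\tau(x)]}{\tau} \;\leq\; \Bigl( \max_{1\leq\tau\leq t} \E[R_\tau(x)] \Bigr) \sum_{\tau=1}^t \frac{1}{\tau},
\]
and then invoke the standard estimate $\sum_{\tau=1}^t 1/\tau \leq 1 + \log t$ to conclude.

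Honestly, there is no serious obstacle in this proof; the only subtle point worth stating carefully is the measurability of $x_\tau$ with respect to $\mathcal{H}_\tau$ so that the conditional-expectation swap is valid. Everything else is a direct consequence of convexity and a harmonic-series bound.
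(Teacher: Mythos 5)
Your proof is correct and is essentially the argument the paper relies on: the first inequality is exactly the standard online-to-batch conversion (Jensen plus first-order convexity plus the tower property to trade $\nabla f(x_\tau)$ for $g_\tau$), which the paper simply cites from Orabona's Theorem~3.1, and the second inequality is obtained by summing the first and applying $\sum_{\tau=1}^t 1/\tau \leq 1 + \log t$, just as you do. The only point both you and the paper leave implicit is that factoring $\max_\tau \E[R_\tau(x)]$ out of the harmonic sum and then bounding the sum by $1 + \log t$ requires $\max_\tau \E[R_\tau(x)] \geq 0$, which holds in every use case of the theorem but is not forced by the hypotheses as stated.
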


In Theorem~\ref{thm:online_to_batch}, the first inequality can be found in Theorem~3.1 of \citet{Orabona2023a}, and the second inequality follows immediately by summing the first one over $t$.

\ifdefined\arxiv
\section{Proofs}
\else
\section{PROOFS}
\fi

\subsection{Local Norm and Dual Local Norm} \label{app:local_norm}

\begin{lemma}
The local norm $\norm{\cdot}_{\rho}$ is given by $\norm{X}_{\rho} = \sqrt{ \tr((\rho^{-1}X)^2) } = \sqrt{ \tr((\rho^{-1/2}X\rho^{-1/2})^2) }$.
\end{lemma}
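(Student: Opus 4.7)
The plan is to compute the Hessian of $h(\rho) = -\log\det\rho$ directly and then show the two claimed expressions agree via the cyclic property of the trace. Since the local norm is by definition $\norm{X}_\rho = \sqrt{D^2 h(\rho)[X,X]}$, the whole task reduces to computing a second-order directional derivative of $-\log\det$ on $\H_{++}^d$.

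First, I would compute $D h(\rho)[X] = -\tr(\rho^{-1}X)$ by differentiating $t \mapsto -\log\det(\rho + tX)$ at $t = 0$, which follows from Jacobi's formula (or by diagonalizing in the eigenbasis of $\rho$ and reducing to the scalar identity $\frac{d}{dt}\log(\lambda + tx)|_{t=0} = x/\lambda$). Next, I would differentiate once more, using the identity
\[
\frac{d}{dt}(\rho + tX)^{-1}\Big|_{t=0} = -\rho^{-1} X \rho^{-1},
\]
which is obtained by differentiating $(\rho + tX)(\rho + tX)^{-1} = I$. This yields
\[
D^2 h(\rho)[X,X] = -\frac{d}{dt}\tr\bigl((\rho+tX)^{-1} X\bigr)\Big|_{t=0} = \tr(\rho^{-1} X \rho^{-1} X) = \tr((\rho^{-1}X)^2),
\]
establishing the first equality.

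For the second equality I would use the cyclic property of trace: inserting $\rho^{-1} = \rho^{-1/2}\rho^{-1/2}$ twice gives
\[
\tr((\rho^{-1}X)^2) = \tr(\rho^{-1/2}\rho^{-1/2} X \rho^{-1/2}\rho^{-1/2} X) = \tr\bigl((\rho^{-1/2} X \rho^{-1/2})^2\bigr),
\]
where the last step cycles the trailing $\rho^{-1/2}$ factor to the front. Since $\rho^{-1/2} X \rho^{-1/2}$ is Hermitian, this expression is manifestly nonnegative, so taking square roots is well-defined and yields both claimed formulae for $\norm{X}_\rho$.

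I do not anticipate a real obstacle here: the only delicate point is justifying the matrix differentiation $\frac{d}{dt}(\rho + tX)^{-1}$, which holds on the open set $\H_{++}^d$ where the inverse is smooth, so for $|t|$ small enough $\rho + tX$ remains positive definite and the Neumann series argument applies. Everything else is a direct trace manipulation.
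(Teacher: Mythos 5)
Your proposal is correct and follows essentially the same route as the paper: compute $D^2 h(\rho)[X,X]$ by differentiating $t \mapsto -\log\det(\rho+tX)$ twice, using Jacobi's formula for the first derivative and $\frac{d}{dt}(\rho+tX)^{-1}|_{t=0} = -\rho^{-1}X\rho^{-1}$ for the second, then cycling the trace. The paper simply cites references (Boyd \& Vandenberghe, Hiai \& Petz) for the differentiation identities you rederive from scratch, and it leaves the cyclic-trace rewriting to the reader, but there is no substantive difference.
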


\begin{proof}
By Appendix~A.4.1 of \citet{Boyd2004a} and Example~3.20 of \citet{Hiai2014a}, we write
\[
\begin{split}
	\norm{X}_{\rho}^2 &= D^2 h(\rho)[X,X] \\
	&= \frac{\du^2}{\du t^2} -\log\det(\rho + tX)\bigg\rvert_{t=0} \\
	&= \frac{\du}{\du t} -\tr((\rho + tX)^{-1}X) \bigg\rvert_{t=0} \\
	&= \tr\left( -\frac{\du}{\du t}(\rho+tX)^{-1}\bigg\rvert_{t=0} X \right) \\
	&= \tr( \rho^{-1}X\rho^{-1}X).
\end{split}
\]
The lemma follows.
\end{proof}

\begin{lemma}\label{lem:dual_norm}
The dual norm of $\norm{\cdot}_{\rho}$ is given by $\norm{X}_{\rho,\ast} = \sqrt{ \tr( (\rho X)^2 ) } = \sqrt{ \tr((\rho^{1/2}X\rho^{1/2})^2) }$. 
\end{lemma}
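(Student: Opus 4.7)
The plan is to compute the dual norm directly from the definition $\norm{X}_{\rho,\ast} = \sup_{\norm{Y}_\rho \leq 1} \braket{X,Y}$, where the inner product on $\H^d$ is the Hilbert--Schmidt inner product $\braket{X,Y} = \tr(XY)$ (since both matrices are Hermitian). The key idea is a change of variables that turns the local norm into the ordinary Frobenius norm, after which the supremum is evaluated by Cauchy--Schwarz in the Hilbert--Schmidt inner product.

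Concretely, I would set $Z \coloneqq \rho^{1/2} Y \rho^{1/2}$ for $Y \in \H^d$; this is a bijective linear map on $\H^d$ because $\rho \in \H^d_{++}$. The previous lemma gives $\norm{Y}_\rho^2 = \tr((\rho^{-1/2} Y \rho^{-1/2})^2) = \tr(Z^2) = \norm{Z}_F^2$. Under this substitution, cyclicity of the trace yields $\braket{X, Y} = \tr(X \rho^{1/2} Z \rho^{1/2}) = \tr((\rho^{1/2} X \rho^{1/2}) Z)$, so
\[
\norm{X}_{\rho,\ast} = \sup_{\norm{Z}_F \leq 1} \tr((\rho^{1/2} X \rho^{1/2}) Z) = \norm{\rho^{1/2} X \rho^{1/2}}_F = \sqrt{\tr((\rho^{1/2} X \rho^{1/2})^2)},
\]
where the second equality is the standard self-duality of the Frobenius norm (attained at $Z = \rho^{1/2} X \rho^{1/2} / \norm{\rho^{1/2} X \rho^{1/2}}_F$).

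Finally, to obtain the alternate expression, I would expand and apply cyclicity of the trace once more: $\tr((\rho^{1/2} X \rho^{1/2})^2) = \tr(\rho^{1/2} X \rho X \rho^{1/2}) = \tr((\rho X)^2)$. No step should be genuinely hard; the only mild subtlety is justifying that the substitution $Y \mapsto \rho^{1/2} Y \rho^{1/2}$ is a bijection on $\H^d$, which is immediate from $\rho \succ 0$.
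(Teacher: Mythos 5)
Your proof is correct and is essentially the same as the paper's: both reduce the supremum defining the dual norm to an application of Cauchy--Schwarz in the Hilbert--Schmidt inner product, after conjugating by $\rho^{1/2}$. Your phrasing via the change of variables $Z = \rho^{1/2} Y \rho^{1/2}$ and Frobenius self-duality is just a cleaner packaging of the paper's direct Cauchy--Schwarz argument, and the maximizer you identify corresponds to the paper's $\tau = \rho X \rho / \sqrt{\tr((\rho^{1/2}X\rho^{1/2})^2)}$.
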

\begin{proof}
By the definition of dual norm,
\begin{equation*}
	\norm{X}_{\rho,\ast} = \sup_{\norm{\tau}_{\rho}=1} \abs{ \tr(X\tau) }
	= \sup_{\norm{\tau}_{\rho}=1} \abs{ \tr(\rho^{1/2}X\rho^{1/2}\rho^{-1/2}\tau\rho^{-1/2}) }.
\end{equation*}
By the Cauchy-Schwarz inequality and $\norm{\tau}_{\rho}=1$, we have
\[
\abs{ \tr(\rho^{1/2}X\rho^{1/2}\rho^{-1/2}\tau\rho^{-1/2}) }
\leq \sqrt{ \tr((\rho^{1/2}X\rho^{1/2})^2) \tr((\rho^{-1/2}\tau\rho^{-1/2})^2) }
= \sqrt{ \tr((\rho^{1/2}X\rho^{1/2})^2) }.
\]
Then,
\begin{equation*}
	\sup_{\norm{\tau}_{\rho}=1} \abs{ \tr(X\tau) } \leq \sup_{\norm{\tau}_{\rho}=1}\sqrt{ \tr((\rho^{1/2}X\rho^{1/2})^2) }
	= \sqrt{ \tr((\rho^{1/2}X\rho^{1/2})^2) }.
\end{equation*}
The equality can be achieved by taking
\[
	\tau = \frac{\rho X\rho}{ \sqrt{ \tr((\rho^{1/2}X\rho^{1/2})^2) } }.
\]
The lemma follows.
\end{proof}

\subsection{Proof of Lemma~\ref{lem:lipschitz}} \label{app:lipschitz}
We write
\begin{equation*}
	\norm{\nabla f(\rho)}_{\rho,\ast}^2
	= \tr\left(\left( \E \frac{\rho^{1/2}A\rho^{1/2}}{\tr(\rho A)} \right)^{\!\!2}\right)
	\leq \E \tr\left(\left( \frac{\rho^{1/2}A\rho^{1/2}}{\tr(\rho A)} \right)^{\!\!2}\right)
	= \E \frac{\tr((\rho^{1/2} A \rho^{1/2})^2)}{(\tr(\rho^{1/2} A \rho^{1/2}))^2}
	\leq 1,
\end{equation*}
where the first inequality follows from the convexity of $\tr(A^2)$ and Jensen's inequality, and the second inequality follows from the inequality $0\leq \tr(A^2) \leq (\tr A)^2$ for $A\in\H_+^d$.

\subsection{Proof of Lemma~\ref{lem:self_bounding}} \label{app:self_bounding}
The first few steps follow from Lemma~4.7 of \citet{Tsai2023b}.
The main simplification of the original proof is the use of Theorem~\ref{thm:dikin_ellipsoid}, which we will see later.

Write $\alpha_\rho = \alpha_\rho(\nabla f(\rho))$ for simplicity and assume $\norm{ \nabla f(\rho) + \alpha_\rho I}_{\rho,\ast}\neq 0$.
Otherwise, the lemma holds immediately.
Fix $\rho\in\ri\D_d$.
By relative smoothness of $f$ (Definition~\ref{def:relative_smoothness} and Lemma~\ref{lem:relative_smoothness}), we have
\[
	f(\rho') \leq f(\rho) + \braket{ \nabla f(\rho) , \rho' - \rho } + \left[ h(\rho') -  h(\rho) - \braket{ \nabla h(\rho), \rho' - \rho } \right],\quad\forall \rho'\in\ri\D_d,
\]
where $\braket{U,V} \coloneqq \tr(U^\ast V)$ is the Hilbert-Schmidt inner product on $\H^d$.
Then, by self-concordance of $h$ (Theorem~\ref{thm:self_concordant_smoothness} and Lemma~\ref{lem:self_concordance}), we have
\[
	f(\rho') \leq f(\rho) + \braket{ \nabla f(\rho) , \rho' - \rho } + \norm{ \rho' - \rho }_{\rho}^2, \quad \forall \rho'\in\ri\D_d: \norm{ \rho' - \rho }_{\rho}\leq 1/2,
\]
where $\norm{\cdot}_{\rho}$ is the local norm associated with $h$.
Since $\braket{I,\rho-\rho'} = 0$, we write
\[
	f(\rho') \leq f(\rho) + \braket{ \nabla f(\rho) + \alpha_\rho I, \rho' - \rho } + \norm{ \rho' - \rho }_{\rho}^2, \quad \forall \rho'\in\ri\D_d:  \norm{ \rho' - \rho }_{\rho}\leq 1/2.
\]
Rearraging the terms and taking supremum over all possible $\rho'$, we obtain
\begin{equation} \label{eq:self_bounding_first}
	\sup_{\rho'\in\ri\D_d:\norm{\rho'-\rho}_{\rho}\leq 1/2} \braket{ -\nabla f(\rho) - \alpha_\rho I, \rho'-\rho } - \norm{\rho' - \rho}_{\rho}^2 \leq f(\rho) - \min_{\rho'\in\D_d} f(\rho').
\end{equation}

Next, for $\beta\in [0,1/2]$, define
\begin{equation*}
	\rho'_\beta \coloneqq \rho -\beta \frac{ \rho (\nabla f(\rho) + \alpha_\rho I) \rho}{ \norm{ \nabla f(\rho) + \alpha_\rho I}_{\rho,\ast} }.
\end{equation*}
We will plug $\rho'_\beta$ into the supremum \eqref{eq:self_bounding_first} and must verify that $\rho'_\beta$ satisfies the constraints.
Since
\begin{multline*}
	\tr\left( \rho (\nabla f(\rho) + \alpha_\rho I) \rho \right)
	\ifdefined\arxiv \\ \fi
	= \tr\left(\rho\left(-\E\frac{A}{\tr(A\rho)} + \E\frac{\tr(A\rho^2)}{ \tr(A\rho)\tr(\rho^2)}I \right)\rho\right)
	= \E\left[ -\frac{\tr(A\rho^2)}{\tr(A\rho)} + \frac{\tr(A\rho^2) \tr(\rho^2) }{\tr(A\rho) \tr(\rho^2) } \right]
	= 0,
\end{multline*}
we have $\tr\rho_\beta' = \tr\rho = 1$.
Second, by the definition of $\alpha_\rho$ \eqref{eq:alpha} and Lemma~\ref{lem:lipschitz}, we have $\norm{\rho'_\beta - \rho}_{\rho} = \beta  \leq 1/2$.
At last, we need to verify $\rho_\beta'> 0$.
Since $\norm{ \rho'_\beta - \rho }_{\rho}\leq 1/2$, by Theorem~\ref{thm:dikin_ellipsoid}, we have $\smash{\rho'_\beta\in\dom h = \H_{++}^d}$ and $\rho'_\beta>0$.
The application of Theorem~\ref{thm:dikin_ellipsoid} simplifies the proof of \citet{Tsai2023b} because we no longer need to check $\rho_\beta' > 0$ explicitly.
Plugging $\rho'_\beta$ and lower bounding the supremum \eqref{eq:self_bounding_first}, we have
\begin{equation*}
	\sup_{0\leq\beta\leq 1/2}(\beta - \beta^2)\norm{\nabla f(\rho) + \alpha_\rho I}_{\rho,\ast}^2 \leq f(\rho) - \min_{\rho'\in\D_d} f(\rho').
\end{equation*}
The lemma follows by noticing that $\beta=1/2$ achieves the supremum.

\subsection{Proof of Lemma~\ref{lem:oracle_b}} \label{app:oracle}
First, it is clear that the unbiasedness property holds.
By Lemma~\ref{lem:lipschitz}, we can take $G=1$.
For the variance, we write
\[
\begin{split}
	\E\left[ \norm{g_t - \nabla f(\rho_t)}_{\rho_t,\ast}^2 | \mathcal{H}_t \right]
	&= \E\left[ \left\lVert \frac{1}{B}\sum_{b=1}^B (\nabla \ell_b(\rho_t)- \nabla f(\rho_t)) \right\rVert_{\rho_t,\ast}^2 \bigg| \mathcal{H}_t \right] \\
	&= \frac{1}{B^2}\E\left[ 
	\sum_{1\leq b,b'\leq B}\tr( (\nabla \ell_b(\rho_t)- \nabla f(\rho_t))\rho_t(\nabla \ell_{b'}(\rho_t)- \nabla f(\rho_t))\rho_t ) \bigg| \mathcal{H}_t \right] \\
	&= \frac{1}{B^2} \E\left[ \sum_{b=1}^B \norm{ \nabla \ell_b(\rho_t)- \nabla f(\rho_t) }_{\rho_t,\ast}^2 \bigg| \mathcal{H}_t \right],
\end{split}
\]
where the second equality follows from the explicit formula of the dual local norm (Lemma~\ref{lem:dual_norm});
the third equality follows from the independence of $b,b'$ and unbiasedness of $\nabla\ell_b$ and $\nabla\ell_{b'}$.
Finally, by the triangle inequality and Lemma~\ref{lem:lipschitz},
\begin{multline}
	\E\left[ \norm{ \nabla \ell_b(\rho_t)- \nabla f(\rho_t) }_{\rho_t,\ast}^2 \big| \mathcal{H}_t \right]
	\ifdefined\arxiv \\ \fi
	\leq \E\left[ \left( \norm{ \nabla \ell_b(\rho_t)}_{\rho_t,\ast} + \norm{\nabla f(\rho_t)}_{\rho_t,\ast} \right)^2 | \mathcal{H}_t\right]
	\leq \E\left[ (1+1)^2 | \mathcal{H}_t \right]
	\leq 4.
\end{multline}
The lemma follows.

\subsection{Proof of Theorem~\ref{thm:sda}} \label{app:sda_theorem}
Let $h(\rho)=-\log\det\rho - d\log d$ be the logarithmic barrier.
Note that Algorithm~\ref{alg:sda} is derived by applying Algorithm~\ref{alg:ftrl} with the regularizer $h$ to the online linear optimization problem (Appendix~\ref{app:olo}), followed by the online-to-batch conversion (Algorithm~\ref{alg:online_to_batch}).

Fix $\rho\in\D_d$.
To deal with the unboundedness of $h$ on $\D_d$, we first apply the technique used in Lemma~10 of \citet{Luo2018a}.
Let any $\rho\in\D_d$, define $\tilde{\rho} = (t/(t+1))\rho + 1/(t+1)(I/d) \in \D_d$.
Then, by convexity,
\begin{multline*}
	f(\tilde{\rho}) - f(\rho) \leq \braket{ \nabla f(\tilde{\rho}), \tilde{\rho} - \rho }
	\ifdefined\arxiv \\ \fi
	= \E\left[ \frac{\tr(A\rho) - \tr(A\tilde{\rho})}{\tr(A\tilde{\rho})} \right]
	= \E\left[ \frac{(1+1/t)\tr(A\tilde{\rho}) - (1/dt)\tr A - \tr(A\tilde{\rho})}{\tr(A\tilde{\rho})} \right]
	\leq \frac{1}{t}.
\end{multline*}
Therefore,
\[
	\E\left[ f(\bar{\rho}_t) - f(\rho) \right] \leq \E\left[ f(\bar{\rho}_t) - f(\tilde{\rho}) \right] + \frac{1}{t}.
\]
Applying the first inequality in Theorem~\ref{thm:online_to_batch}, we have
\begin{equation} \label{eq:online_to_batch}
	\E\left[ f(\bar{\rho}_t) - f(\rho) \right] \leq \frac{\E[ R_t(\tilde{\rho}) ] + 1}{t}
	\leq \frac{\max_{1\leq\tau\leq t} \E R_\tau(\tilde{\rho}) + 1}{t},
\end{equation}
where
\[
	R_t(\tilde{\rho}) = \sum_{\tau=1}^t \braket{ g_\tau , \rho_\tau - \tilde{\rho} } = \sum_{\tau=1}^t \braket{ g_\tau + \alpha_{\rho_\tau}(g_\tau) I, \rho_\tau - \tilde{\rho} }
\]
is the regret of Algorithm~\ref{alg:ftrl} applied to the online linear optimization problem with $v_t = g_t + \alpha_{\rho_\tau}(g_\tau) I$ (see Appendix~\ref{app:olo}).

Next, by assumption and the definition of $\alpha_{\rho}$ \eqref{eq:alpha}, we have $\norm{ g_\tau + \alpha_{\rho_\tau}(g_\tau) I }_{ \rho_\tau, \ast } \leq \norm{ g_\tau }_{ \rho_\tau, \ast } \leq G$.
Applying Corollary~\ref{cor:ftrl} with $M=1$ and $D=\sqrt{d}$, we obtain
\[
	R_t(\tilde{\rho})
	\leq\left( \frac{h(\tilde{\rho})}{\sqrt{d}} + 2\sqrt{d} \right)\sqrt{ G_t + 4dG^2 + G^2}
	\leq (\log t + 3)\sqrt{ dG_t + 4d^2G^2 + dG^2},
\]
where $G_t = \sum_{\tau=1}^t \norm{ g_\tau + \alpha_{\rho_\tau}(g_\tau) I }_{ \rho_\tau, \ast }^2$.
The last inequality follows from $h(\tilde{\rho})\leq d\log(t+1) \leq d\log t + d$.
By Jensen's inequality, the expected regret is bounded by
\[
	\E R_t(\tilde{\rho}) \leq (\log t + 3)\sqrt{d \E G_t + 4d^2G^2 + dG^2}.
\]
Then, we take maximum over $t$ on both sides.
Since the upper bound is increasing in $t$, we obtain
\begin{equation} \label{eq:regret_bound}
	\max_{1\leq\tau\leq t}\E R_\tau(\tilde{\rho})
	\leq (\log t + 3)\sqrt{d \E G_t + 4d^2G^2 + dG^2}.
\end{equation}

Now, we upper bound $\E G_t$ by $\max_{1\leq\tau\leq t}\E R_{\tau}(\tilde{\rho})$.
Denote by $\mathcal{H}_t = \{ g_1,\ldots, g_{t-1}, \rho_1,\ldots,\rho_t\}$ the past information before obtaining $g_t$.
By the linearity of $\alpha_\rho$ \eqref{eq:alpha}, we have
\[
	\E [ \alpha_{\rho_t}(g_t) | \mathcal{H}_\tau ] = \alpha_{\rho_t} (\nabla f(\rho_t)).
\]
Recall from Lemma~\ref{lem:dual_norm} that $\norm{X}_{\rho,\ast} = \norm{\rho^{1/2}X\rho^{1/2}}_F$, where $\norm{X}_F\coloneqq\sqrt{\tr(X^\ast X)}$ is the Frobenius norm.
By the law of total expectation, Lemma~\ref{lem:dual_norm}, and the variance decomposition $\E[\norm{X}_F^2] = \E[\norm{X-\E X}_F^2] + \norm{\E X}_F^2$ for a random matrix $X$, we have
\begin{equation} \label{eq:variance}
\begin{split}
	&\E \left[ \norm{g_\tau + \alpha_{\rho_\tau}(g_\tau) I }_{\rho_\tau,\ast}^2 \right] \\
	= &
	\E_{\mathcal{H}_\tau}\E \left[ \norm{g_\tau + \alpha_{\rho_\tau}(g_\tau) I }_{\rho_\tau,\ast}^2 | \mathcal{H}_\tau \right]
	\\
	= &
	\E_{\mathcal{H}_\tau}\E \left[ \norm{g_\tau - \nabla f(\rho_\tau) + \alpha_{\rho_\tau}(g_\tau - \nabla f(\rho_\tau))I }_{\rho_\tau,\ast}^2 | \mathcal{H}_\tau \right]
	+ \E_{\mathcal{H}_\tau} \left[ \norm{\nabla f(\rho_\tau) + \alpha_{\rho_\tau}( \nabla f(\rho_\tau) )I }_{\rho_\tau,\ast}^2 \right]
	\\
	= &\E_{\mathcal{H}_\tau}\E \left[ \norm{g_\tau - \nabla f(\rho_\tau) + \alpha_{\rho_\tau}(g_\tau - \nabla f(\rho_\tau))I }_{\rho_\tau,\ast}^2 | \mathcal{H}_\tau \right]
	+ \E \left[ \norm{\nabla f(\rho_\tau) + \alpha_{\rho_\tau}( \nabla f(\rho_\tau) )I }_{\rho_\tau,\ast}^2 \right].
\end{split}
\end{equation}
We bound the two terms separately.
By the definition of $\alpha_\rho$ \eqref{eq:alpha} and the bounded-variance assumption,
\[
	\E_{\mathcal{H}_\tau} \E \left[ \norm{g_\tau - \nabla f(\rho_\tau) + \alpha_{\rho_\tau}(g_\tau - \nabla f(\rho_\tau))I  }_{\rho_\tau,\ast}^2 | \mathcal{H}_\tau \right]
	\leq \E_{\mathcal{H}_\tau} \E \left[ \norm{g_\tau - \nabla f(\rho_\tau)}_{\rho_\tau, \ast}^2 | \mathcal{H}_\tau \right]
	\leq \sigma^2.
\]
Furthermore, let $\delta_\tau \coloneqq \E f(\rho_\tau) - f(\tilde{\rho})$.
By the self-bounding-type property (Lemma~\ref{lem:self_bounding}), we have
\[
	\E\left[ \norm{\nabla f(\rho_\tau) + \alpha_{\rho_\tau}( \nabla f(\rho_\tau) )I }_{\rho_\tau,\ast}^2 \right]
	\leq \E f(\rho_\tau) - \min_{\rho\in\D_d} f(\rho) \leq \delta_\tau.
\]
Therefore, we have $\eqref{eq:variance} \leq \sigma^2 + 4\delta_\tau$ and 
\[
	\E G_t = \sum_{\tau=1}^t \E \left[ \norm{g_\tau + \alpha_{\rho_\tau}(g_\tau) I }_{\rho_\tau,\ast}^2 \right]
	\leq \sigma^2t + 4 \sum_{\tau=1}^t\delta_\tau.
\]
By the second inequality in Theorem~\ref{thm:online_to_batch}, we have $\smash{\sum_{\tau=1}^t\delta_\tau} \leq (1+\log t)\max_{1\leq\tau\leq t}\E R_\tau(\tilde{\rho})$.
Hence,
\[
	\E G_t \leq \sigma^2 t + 4(1+\log t)\max_{1\leq\tau\leq t}\E R_\tau(\tilde{\rho}).
\]

Finally, combining with the bound on the expected regret \eqref{eq:regret_bound}, we have
\[
	\max_{1\leq \tau \leq t}\E R_{\tau}(\tilde{\rho}) \leq (3+\log t)\sqrt{4d(1+\log t)\max_{1\leq\tau\leq t}\E R_{\tau}(\tilde{\rho}) + 4d^2G^2 + dG^2 + \sigma^2 dt}.
\]
By Lemma~4.24 of \citet{Orabona2023a}, solving for $\max_{1\leq \tau \leq t}\E R_{\tau}(\tilde{\rho})$ gives 
\[
	\max_{1\leq \tau \leq t}\E R_{\tau}(\tilde{\rho}) \leq 4d(3+\log t)^3 + 2(3+\log t)\sqrt{\sigma^2 dt + 4d^2G^2 + dG^2}.
\]
The theorem follows by combining the above inequality with \eqref{eq:online_to_batch}.
\ifdefined\arxiv
\section{Additional Nuumerical Results}
\else
\section{ADDITIONAL NUMERICAL RESULTS}
\fi
\label{app:add_numerical_results}

\begin{figure}[b!]
\caption{Performances of all algorithms in Table~\ref{tab:time_complexity_comparison}, SPDHG, and EMD with line search for solving 20 randomly generated Poisson inverse problem instances.
For each algorithm, the solid line represents its average error, and the shaded region indicates the 95\% confidence interval.}
\label{fig:pip_full}
\centering

\begin{subfigure}{0.48\columnwidth}
	\centering
	\caption{The true signal.}\includegraphics[width=\columnwidth]{./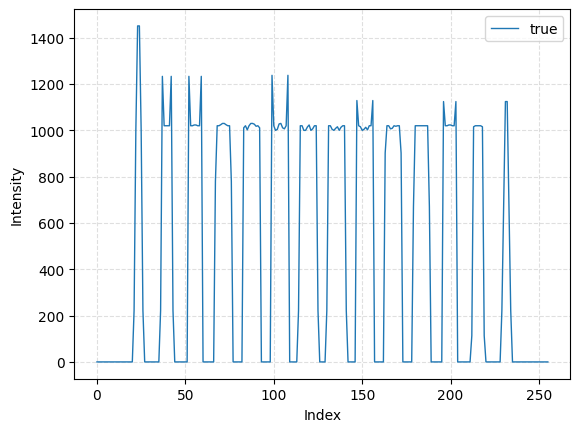}
\end{subfigure}

\begin{subfigure}{0.48\columnwidth}
	\centering
	\caption{Approximate optimization error versus the number of epochs.}
	\includegraphics[width=\columnwidth]{./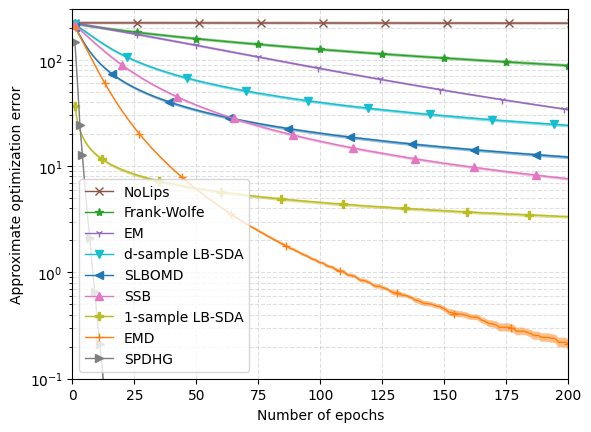}
\end{subfigure}%
\hfill
\begin{subfigure}{0.48\columnwidth}
	\centering
	\caption{Normalized estimation error versus the number of epochs.}
	\includegraphics[width=\columnwidth]{./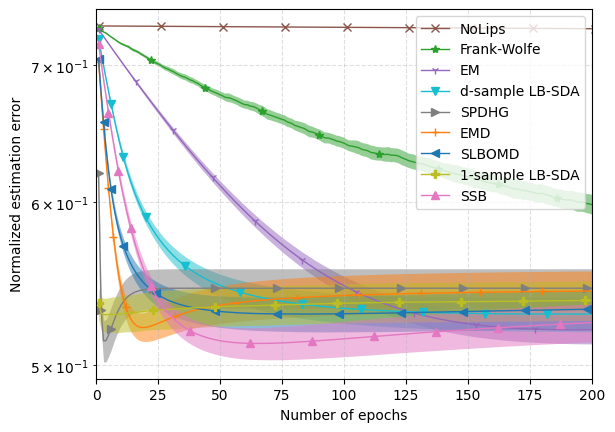}
\end{subfigure}%
\end{figure}

This section presents additional numerical results in terms of the number of epochs, where one \textit{epoch} refers to one full pass of the dataset.
Specifically, one epoch of $B$-sample LB-SDA corresponds to $n/B$ iterations, one epoch of other stochastic methods corresponds to $n$ iterations, and one epoch of batch methods corresponds to one iteration.
The number of epochs is proportional to the number of gradient evaluated by the algorithm.
Note that EMD and diluted iMLE compute function values in the Armijo line search procedure, which pass through the dataset for more than once.
Therefore, the numerical results in terms of the number of epochs favor these two algorithms.
Since computing the function values is much faster than computing the gradients, we still present numerical results in terms of the number of epochs.

Figure~\ref{fig:pip_full} presents results for the experiment of the Poisson inverse problem in Section~\ref{sec:numerical_pip}, while Figure~\ref{fig:mlqst_full} presents results for the experiment of ML quantum state tomography in Section~\ref{sec:numerical_mlqst}.
For the Poisson inverse problem, we randomly generated 20 problem instances under the setup described in Section~\ref{sec:numerical_pip}.
We then reported the average performance of the algorithms on them.
For ML quantum state tomography, we considered only one problem instance because the experiment already took one week.


\begin{figure}[t!]
\caption{Performances of all algorithms in Table~\ref{tab:time_complexity_comparison}, iMLE, diluted iMLE, and EMD with line search for computing the ML estimate for quantum state tomography.}
\label{fig:mlqst_full}
\centering

\begin{subfigure}{0.48\columnwidth}
	\centering
	\caption{Approximate optimization error versus the number of epochs.}
	\includegraphics[width=\columnwidth]{./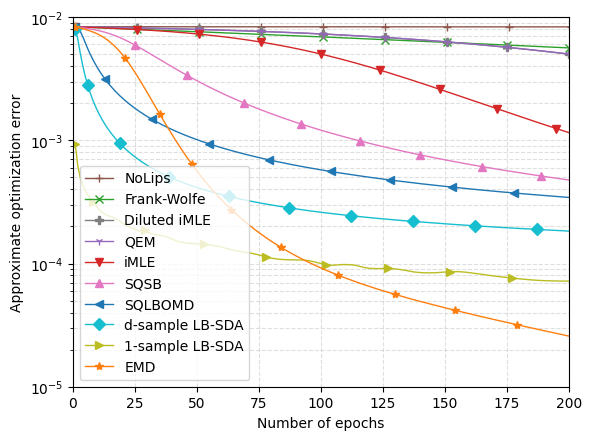}
\end{subfigure}%
\hfill
\begin{subfigure}{0.48\columnwidth}
	\centering
	\caption{Fidelity between the iterates and the $W$ state versus the number of epochs.}
	\includegraphics[width=\columnwidth]{./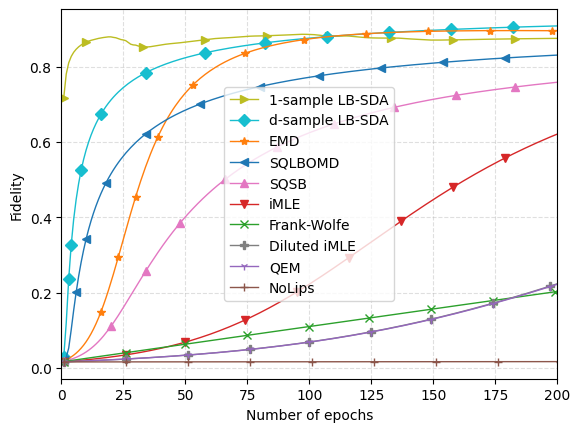}
\end{subfigure}%
\end{figure}

In terms of the optimization error, $1$-sample LB-SDA outperforms other methods with clear complexity guarantees.
Note that $1$-sample LB-SDA converges faster than $d$-sample LB-SDA.
The reason is that after the $s$-th epoch, $1$-sample LB-SDA has performed $ns$ iterations, whereas $d$-sample LB-SDA has only performed $ns/d$ iterations.
According to Corollary~\ref{cor:sda}, the optimization error bound of $1$-sample LB-SDA is $\tildeO(d/(ns) + \sqrt{d/(ns)})$, smaller than the $\smash{\tildeO(d^2/(ns) + \sqrt{d/(ns)})}$ optimization error bound of $d$-sample LB-SDA.
In terms of the fidelity, $1$-sample LB-SDA achieves the best performance among all methods with explicit complexity guarantees for computing the ML estimate for quantum state tomography.

%

\bibliography{./refs}

\end{document}